\newcommand{\E}{\mathbb{E}}
\newcommand{\Var}{\operatorname{Var}}
\newcommand{\1}{\textbf{1}}
\theoremstyle{plain}
\newtheorem{thm}{Theorem}[section]
\newtheorem{lem}[thm]{Lemma}
\newtheorem{cond}{Condition}
\newenvironment{condprime}[1]
  {\innercustomthm}
  {\endinnercustomthm}
\begin{document}

\begin{frontmatter}

\title{Conditions for Posterior Contraction in the Sparse Normal Means Problem} 
\runtitle{Conditions for the sparse normal means problem}

\begin{aug} 
\author{\fnms{S.L.}  \snm{van der Pas}\thanksref{t1}\ead[label=e1]{svdpas@math.leidenuniv.nl}},
  \author{\fnms{J.-B.} \snm{Salomond}\thanksref{t2}\ead[label=e2]{salomond@ceremade.dauphine.fr}}
  \and
  \author{\fnms{J.}  \snm{Schmidt-Hieber}%
  \ead[label=e3]{schmidthieberaj@math.leidenuniv.nl}%
  }

  \thankstext{t1}{Research supported by Netherlands Organization for Scientific
Research NWO.}
  \thankstext{t2}{Research supported by NWO VICI project `Safe Statistics'.}

  \runauthor{van der Pas, Salomond and Schmidt-Hieber}
  
  \address{Leiden University, Mathematical Institute, Niels Bohrweg 1, 2333 CA Leiden, The Netherlands\\ \printead{e1,e3}}
  \address{Universit\'e Paris Dauphine, Place du Mar\'echal DeLattre de Tassigny, 75016 Paris, France\\ \printead{e2}}

\end{aug}

\begin{abstract}
The first Bayesian results for the sparse normal means problem were proven for spike-and-slab priors. However, these priors are less convenient from a computational point of view. In the meanwhile, a large number of continuous shrinkage priors has been proposed. Many of these shrinkage priors can be written as a scale mixture of normals, which makes them particularly easy to implement. We propose general conditions on the prior on the local variance in scale mixtures of normals, such that posterior contraction at the minimax rate is assured. The conditions require tails at least as heavy as Laplace, but not too heavy, and a large amount of mass around zero relative to the tails, more so as the sparsity increases. These conditions give some general guidelines for choosing a shrinkage prior for estimation under a nearly black sparsity assumption. We verify these conditions for the class of priors considered in \cite{Ghosh2015}, which includes the horseshoe and the normal-exponential gamma priors, and for the horseshoe+, the inverse-Gaussian prior, the normal-gamma prior, and the spike-and-slab Lasso, and thus extend the number of shrinkage priors which are known to lead to posterior contraction at the minimax estimation rate. 
\end{abstract}

\begin{keyword}[class=MSC]
\kwd[Primary ]{62F15}, 
\kwd[ Secondary ]{62G20}
\end{keyword}

\begin{keyword}
\kwd{sparsity}
\kwd{nearly black vectors}
\kwd{normal means problem}
\kwd{horseshoe}
\kwd{horseshoe+}
\kwd{Bayesian inference}
\kwd{frequentist Bayes}
\kwd{posterior contraction}
\kwd{shrinkage priors}
\end{keyword}

\end{frontmatter}

\section{Introduction}
In  the sparse normal means problem,  we wish to estimate a sparse vector $\theta$ based on a vector $X^n \in \mathbb{R}^n$, $X^n = (X_1, \ldots, X_n)$, generated according to the model
\begin{equation*}
X_i = \theta_i + \varepsilon_i, \quad i = 1, \ldots, n,
\end{equation*}
where the $\varepsilon_i$ are independent standard normal variables. The vector of interest $\theta$ is sparse in the \emph{nearly black} sense, that is, most of the parameters are zero. We wish to separate the signals (nonzero means) from the noise (zero means). Applications of this model include image reconstruction and nonparametric function estimation using wavelets \cite{Johnstone2004}. 

The model is an important test case for the behaviour of sparsity methods, and has been well-studied. A great variety of frequentist and Bayesian estimators has been proposed, and the popular Lasso \cite{Tibshirani1996} is included in both categories. It is but one example of many approaches towards recovering $\theta$; restricting ourselves to Bayesian methods, other approaches include shrinkage priors such as the spike-and-slab type priors studied by \cite{Johnstone2004, Castillo2012} and \cite{Castillo2015}, the normal-gamma prior  \cite{Griffin2010},  non-local priors \cite{Johnson2010}, the Dirichlet-Laplace prior \cite{Bhattacharya2014}, the horseshoe \cite{Carvalho2010}, the horseshoe+ \cite{Bhadra2015} and the spike-and-slab Lasso \cite{Rockova2015}. 

Our goal is twofold: \emph{recovery} of the underlying mean vector, and \emph{uncertainty quantification}. The benchmark for the former is estimation at the minimax rate. In a Bayesian setting, the typical choice for the estimator is some measure of center of the posterior distribution, such as the posterior mean, mode or median. For the purpose of uncertainty quantification, the natural object to use is a credible set. In order to obtain credible sets that are narrow enough to be informative, yet not so narrow that they neglect to cover the truth, the posterior distribution needs to contract to its center at the same rate at which the estimator approaches the truth. 

For recovery, spike-and-slab type priors give optimal results (\cite{Johnstone2004, Castillo2012, Castillo2015}). These priors assign independently to each component a mixture of a point mass at zero and a continuous prior. Due to the point mass, spike-and-slab priors shrink small coefficients to zero. The advantage is that the full posterior has optimal model selection properties but this comes at the prize of, in general, too narrow credible sets. Another drawback of spike-and-slab methods is that they are computationally expensive although the complexity is much better than what has been previously believed (\cite{Yang2015}). 

Thus, we might ask whether there are priors which are smoother and shrink less than the spike-and-slab but still recover the signal with a (nearly) optimal rate. A naive choice would be to consider the Laplace prior $\propto e^{-\lambda \|\theta\|_1}$ with $\|\theta\|_1=\sum_{i=1}^n |\theta_i|,$ since in this case the maximum a posteriori (MAP)  estimator coincides with the Lasso, which is known to achieve the optimal rates for sparse signals. In \cite{Castillo2015}, Section 3, it was shown that although the MAP-estimator has good properties, the full posterior spreads a non-negligible amount of mass over large neighborhoods of the truth leading to recovery rates that are sub-optimal by a polynomial factor in $n.$ This example shows that if the prior does not shrink enough, we loose the recovery property of the posterior. 

Recently, shrinkage priors were found that are smoother than the spike-and-slab but still lead to (near) minimax recovery rates. Up to now, optimal recovery rates have been established for the horseshoe prior \cite{vanderPas2014}, horseshoe-type priors with slowly varying functions \cite{Ghosh2015}, the empirical Bayes procedure of  \cite{Martin2014}, the spike-and-slab Lasso \cite{Rockova2015}, and the Dirichlet-Laplace prior, although the latter result only holds under a restriction on the signal size \cite{Bhattacharya2014}. Finding smooth shrinkage priors with theoretical guarantees remains an active area of research.

The question arises which features of the prior lead to posterior convergence at the minimax estimation rate. Qualitative discussion on this point is provided by \cite{Carvalho2010}. Intuitively, a prior should place a large amount of mass near zero to account for the zero means, and have heavy tails to counteract the shrinkage effect for the nonzero means. In the present article, we make an attempt to quantify the relevant properties of a prior, by providing general conditions ensuring posterior concentration at the minimax rate, and showing that a large number of priors (including the ones listed above) meets these conditions. 

We study scale mixtures of normals, as many shrinkage priors proposed in the literature are contained in this class and provide general conditions on the prior on the local variance such that posterior concentration at the minimax estimation rate is guaranteed. These conditions are general enough to recover the already known results for the horseshoe prior, the horseshoe-type priors with slowly varying functions and the spike-and-slab Lasso, and to demonstrate that the horsehoe+ \cite{Bhadra2015}, inverse-Gaussian prior \cite{Caron2008} and the normal-gamma prior \cite{Caron2008, Griffin2010} lead to posterior concentration at the correct rate as well. Our conditions in essence mean that a sparsity prior should have tails that are at least as heavy as Laplace, but not too heavy, and there should be a sizable amount of mass close to zero relative to the tails, especially when the underlying vector is very sparse. 

This paper is organized as follows. We state our main result, providing conditions on sparsity priors such that the posterior contracts at the minimax rate in Section \ref{sec:main}. We then show, in Section \ref{sec:examples}, that these conditions hold for the class of priors of \cite{Ghosh2015}, as well as for the horseshoe+, the inverse-Gaussian prior, the normal-gamma prior, and the spike-and-slab Lasso. A simulation study is performed in Section \ref{sec:simulation}, and we conclude with a Discussion. All proofs are given in  Appendix \ref{sec:proofs}.

\emph{Notation}.
Denote the class of nearly black vectors by $\ell_0[p_n] = \{\theta \in \mathbb{R}^n \ : \sum_{i=1}^n \textbf{1}\{\theta_i \neq 0\} \leq p_n\}$. The minimum $\min\{a,b\}$ is given by $a \wedge b$. The standard normal density is denoted by $\phi$, its cdf by $\Phi$, and we set $\Phi^c(x) = 1 - \Phi(x)$. The norm $\|\cdot\|$ is the $\ell_2$-norm.

\section{\label{sec:main}Main results}
Each coefficient $\theta_i$ receives a scale mixture of normals as a prior:
\begin{equation}\label{eq:theprior}
\theta_i \mid \sigma^2_i \sim \mathcal{N}(0, \sigma_i^2), \quad \quad \sigma_i^2 \sim \pi(\sigma_i^2), \quad i = 1, \ldots, n, 
\end{equation}
where $\pi:[0,\infty) \to [0,\infty)$ is a density on the positive reals. While $\pi$ might depend on further hyperparameters, no additional priors are placed on such parameters, rendering the coefficients independent \emph{a posteriori}. The goal is to obtain conditions on $\pi$  such that posterior concentration at the minimax estimation rate is guaranteed.

We use the coordinatewise posterior mean to recover the underlying mean vector. By Tweedie's formula \cite{Robbins1956}, the posterior mean for $\theta_i$ given an observation $x_i$ is equal to $x_i + \frac{d}{dx} \log p(x_i) $, where $p(x_i)$ is the marginal distribution of $x_i$. The posterior mean for parameter $\theta_i$ is thus given by $X_im_{X_i}$, where $m_{x}: \mathbb{R} \to [0,1]$ is
\begin{equation}\label{eq:defmx}
m_x := \frac{\int_0^1 z (1-z)^{-3/2} e^{ \frac{x^2}{2}z} \pi\big(\frac{z}{1-z}\big) dz}{\int_0^1  (1-z)^{-3/2}e^{\frac{x^2}{2}z} \pi\big(\frac{z}{1-z}\big) dz }
= \frac{\int_0^\infty u (1+u)^{-3/2} e^{ \frac{x^2u}{2+2u}} \pi(u) du}{\int_0^\infty (1+u)^{-1/2} e^{ \frac{x^2u}{2+2u}} \pi(u) du}.
\end{equation}
We denote the estimate of the full vector $\theta$ by  $\widehat\theta = (X_1m_{X_1}, \ldots, X_nm_{X_n})$. An advantage of scale mixtures of normals as shrinkage priors over spike-and-slab-type priors, is that the posterior mean can be represented as the observation multiplied by \eqref{eq:defmx}. The ratio \eqref{eq:defmx} can be computed via integral approximation methods such as a quadrature routine. See \cite{Polson2012}, \cite{Polson2012-2} and \cite{vanderPas2014} for more discussion on this point in the context of the horseshoe. 

Our main theorem, Theorem \ref{thm:main}, provides three conditions on $\pi$ under which a prior of the form \eqref{eq:theprior} leads to an upper bound on the posterior contraction rate of the order of the minimax rate. We first state and discuss the conditions. In addition, we present stronger conditions that are easier to verify. Condition \ref{cond1} is required for our bounds on the posterior mean and variance for the nonzero means. The remaining two are used for the bounds for the zero means. 

The first condition involves a class of regularly varying functions. Recall that a function $\ell$ is called \emph{regular varying (at infinity)} if  for any $a>0,$ the ratio $\ell(au)/\ell(u)$ converges to the same non-zero limit as $u\rightarrow \infty.$ For our estimates, we need a slightly different notion, that will be introduced next. We say that a function $L$ is {\it uniformly regular varying,} if there exist constants $R, u_0\geq 1,$ such that
\begin{align}
	\frac{1}{R} \leq \frac{L(au)}{L(u)} \leq R, \quad \text{for all} \ a\in [1,2], \ \text{and all} \ u\geq u_0.
	\label{eq.unif_reg_vary_at_zero_prop}
\end{align} 
In particular, $L(u) = u^b,$ and $L(u)=\log^b(u)$ with $b\in \mathbb{R}$ are uniformly regular varying (take for example $R=2^{|b|}$ and $u_0=2$). An example of a function that is not uniformly regular varying is $L(u)=e^{u}.$ From the definition, we can easily deduce the following properties of functions that are uniformly regular varying. Firstly, $u\mapsto L(u)$ is on $[u_0, \infty)$ either everywhere positive or everywhere negative.  If $L$ is uniformly regular varying then also $u\mapsto 1/L(u)$ and if $L_1$ and $L_2$ are uniformly regular varying, then also their product $L_1L_2.$

We are now ready to present Condition \ref{cond1}, and the stronger Condition \ref{cond1prime}, which implies Condition 1, as shown in Lemma \ref{lem:cond1prime}.\\

\begin{cond}\label{cond1}
For some $b\geq 0,$ we can write $z\mapsto \pi(u) = L_n(u) e^{-bu},$ where $L_n$ is a function that satisfies \eqref{eq.unif_reg_vary_at_zero_prop} for some $R, u_0\geq 1$ which do not depend on $n.$ Suppose further that there are constants $C', K , b'\geq 0$ and $u_* \geq 1,$ such that 
\begin{align}
	  C' \pi(u) \geq \Big(\frac{p_n}{n}\Big)^{K} e^{-b'u}  \quad \text{for all} \ u\geq u_*.
	\label{eq.assump_on_lb_Ln}
\end{align}
\end{cond}

\begin{condprime}{1'}\label{cond1prime}
Consider a global-local scale mixture of normals: 
\begin{equation} \label{eq:globallocal}
\theta_i \mid \sigma_i^2, \tau^2 \sim \mathcal{N}(0, \sigma_i^2\tau^2), \quad \sigma_i^2 \sim \widetilde \pi(\sigma_i^2), \quad i = 1, \ldots, n.
\end{equation}
Assume that $\widetilde \pi$ is a uniformly regular varying function which does not depend on $n$, and $\tau = (p_n/n)^\alpha$ for $\alpha \geq 0$. \\
\end{condprime}

Condition \ref{cond1} assures that the posterior recovers nonzero means with the optimal rate. Thus, the condition can be seen as a sufficient condition on the tail behavior of the density $\pi$ for $\ell^2$-recovery. The tail may decay exponentially fast, which is consistent with the conditions found on the `slab' in the spike-and-slab priors discussed by \cite{Castillo2012}. In general, $\pi$ will depend on $n$ through a hyperparameter. Condition \ref{cond1} requires that the $n$ dependence behaves roughly as a power of $p_n/n.$

In the important special case where each $\theta_i$ is drawn independently from a global-local scale mixture, Condition \ref{cond1} is satisfied whenever the density on the local variance is uniformly regular varying, as stated in Condition \ref{cond1prime}. Below, we give the conditions on $\pi$ that guarantee posterior shrinkage at the minimax rate for the zero coefficients. The first condition ensures that the prior $\pi$ puts some finite mass on values between $[0,1].$\\

\begin{cond}\label{cond2}
Suppose that there is a constant $c>0$ such that $\int_0^1 \pi(u) du \geq c.$
\end{cond}

We turn to Condition \ref{cond3} which describes the decay of $\pi$ away from a neighborhood of zero. To state the condition it will be convenient to write
\begin{align}
	s_n:=\frac{p_n} n \log(n/p_n).
	\label{eq.sn_def}
\end{align}

\begin{cond} \label{cond3}
Let $b_n=\sqrt{\log(n/p_n)}$ and assume that there is a constant $C,$ such that
\begin{align*}
	\int_{s_n}^\infty \Big( u \wedge \frac{b_n^3}{\sqrt{u}} \Big) \pi(u) du
	+ b_n \int_1^{b_n^2} \frac{\pi(u)}{\sqrt{u}} du
	\leq C s_n.
\end{align*}
\end{cond}

In order to allow for many possible choices of $\pi,$ the tail condition involves several terms. It is surprising that some control on the interval $\int_{s_n}^1 u\pi(u) du$ is needed. But this turns out to be sharp. Theorem \ref{thm.necessary} proves that if we would relax the condition to $\int_{s_n}^1 u\pi(u) du \lesssim t_n$ for an arbitrary rate $t_n \gg s_n,$ then there is a prior that satisfies all the other conditions needed for  the zero coefficients, but which does not concentrate at the minimax rate.

Below we state two stronger conditions, each of which  obviously imply Condition \ref{cond2} and Condition \ref{cond3} for sparse signals, that is, $p_n =o(n).$\\

\begin{condprime}{A}\label{condA}
Assume that there is a constant $C,$ such that 
\begin{align*}
	\pi(u) \leq \frac{C}{u^{3/2}} \frac{p_n}{n} \sqrt{\log(n/p_n)}, \quad \text{for all} \ u\geq s_n.
\end{align*}
\end{condprime}

\begin{condprime}{B}\label{condB}
Assume that there is a constant $C,$ such that 
\begin{align*}
	\int_{s_n}^\infty \pi(u) du \leq \frac{Cp_n}{n}.
\end{align*}
\end{condprime}

In this case, even a stronger version of Condition \ref{cond2} holds in the sense that nearly all mass is concentrated in the shrinking interval $[0,s_n].$ Notice that Condition \ref{cond3} does not imply Condition \ref{cond2} in general. If for example $\pi$ is a point mass at $n^2,$ then, Condition \ref{cond3} holds but Condition \ref{cond2} does not. Condition \ref{cond1} and Condition \ref{cond3} depend on the relative sparsity $p_n/n.$ Indeed, Condition \ref{cond1} becomes weaker if the signal is more sparse and at the same time Condition \ref{cond3} becomes stronger. This matches intuition, as the prior should shrink more in this case and thus the assumptions that are responsible for the shrinkage effect should become stronger.

\begin{figure}[h]
\begin{center}
\subfigure
{
\label{fig:p-hs}
\includegraphics[width = 0.3\textwidth]{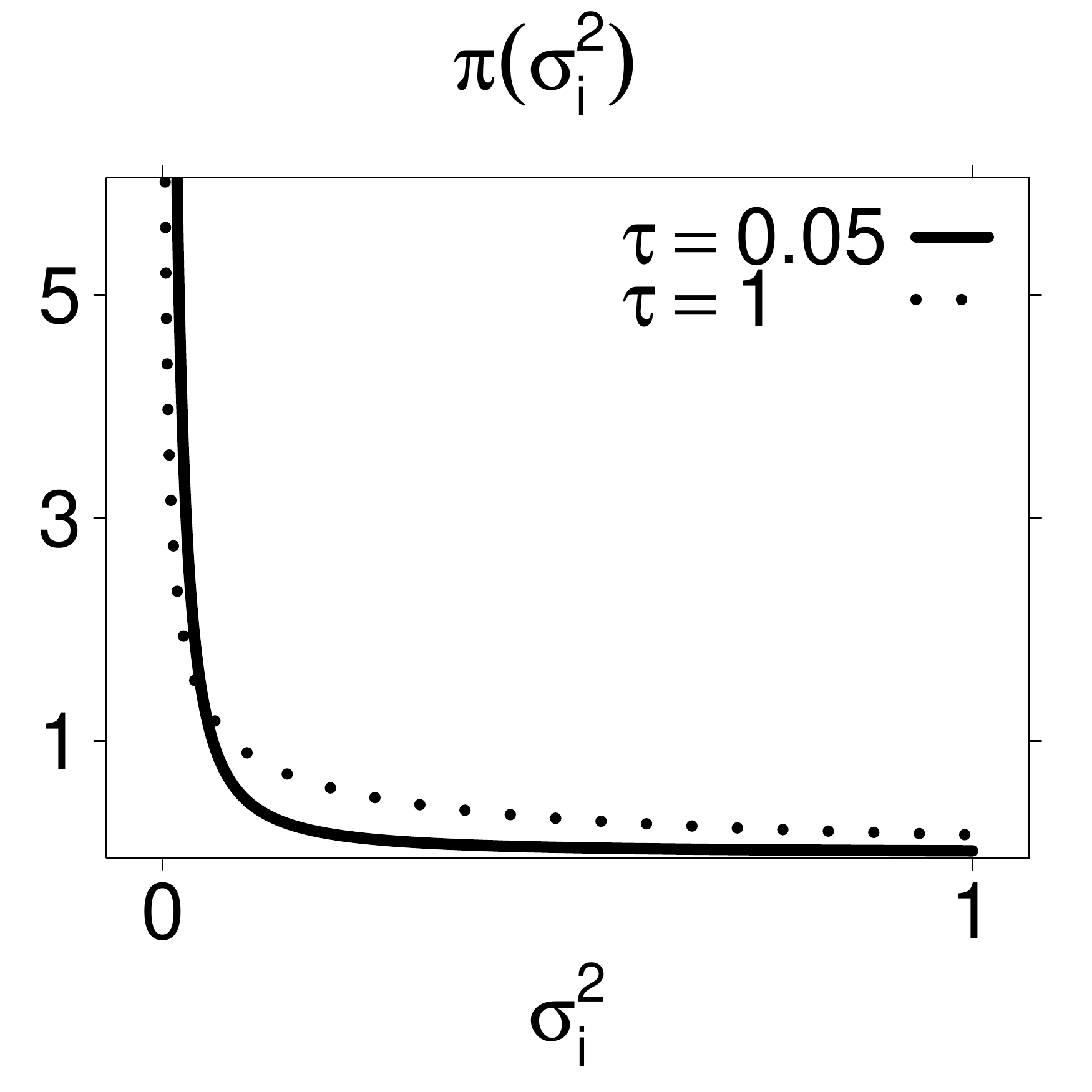}
}
\subfigure
{
\label{fig:p-ig}
\includegraphics[width = 0.3\textwidth]{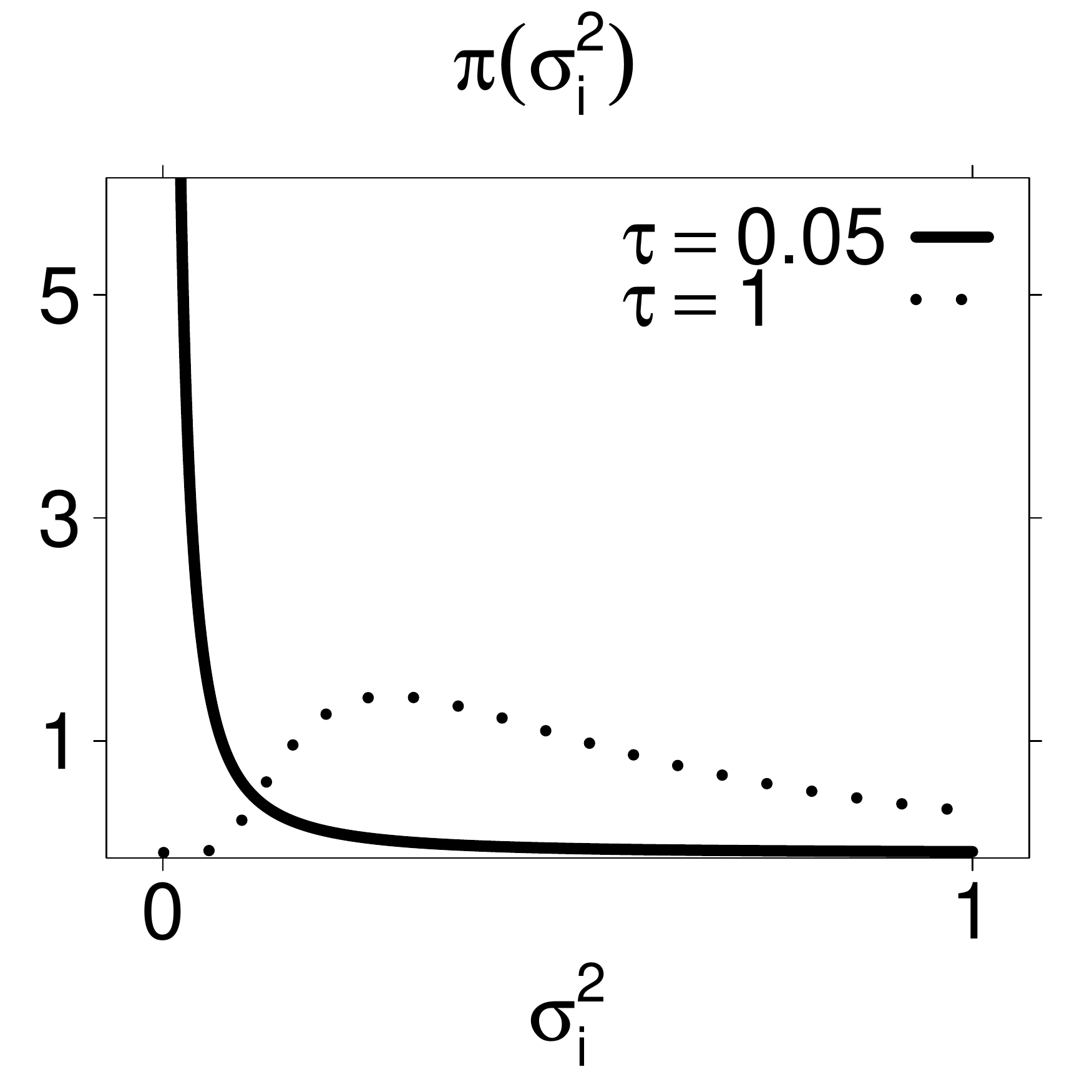}
}
{
\label{fig:p-ng}
\includegraphics[width = 0.3\textwidth]{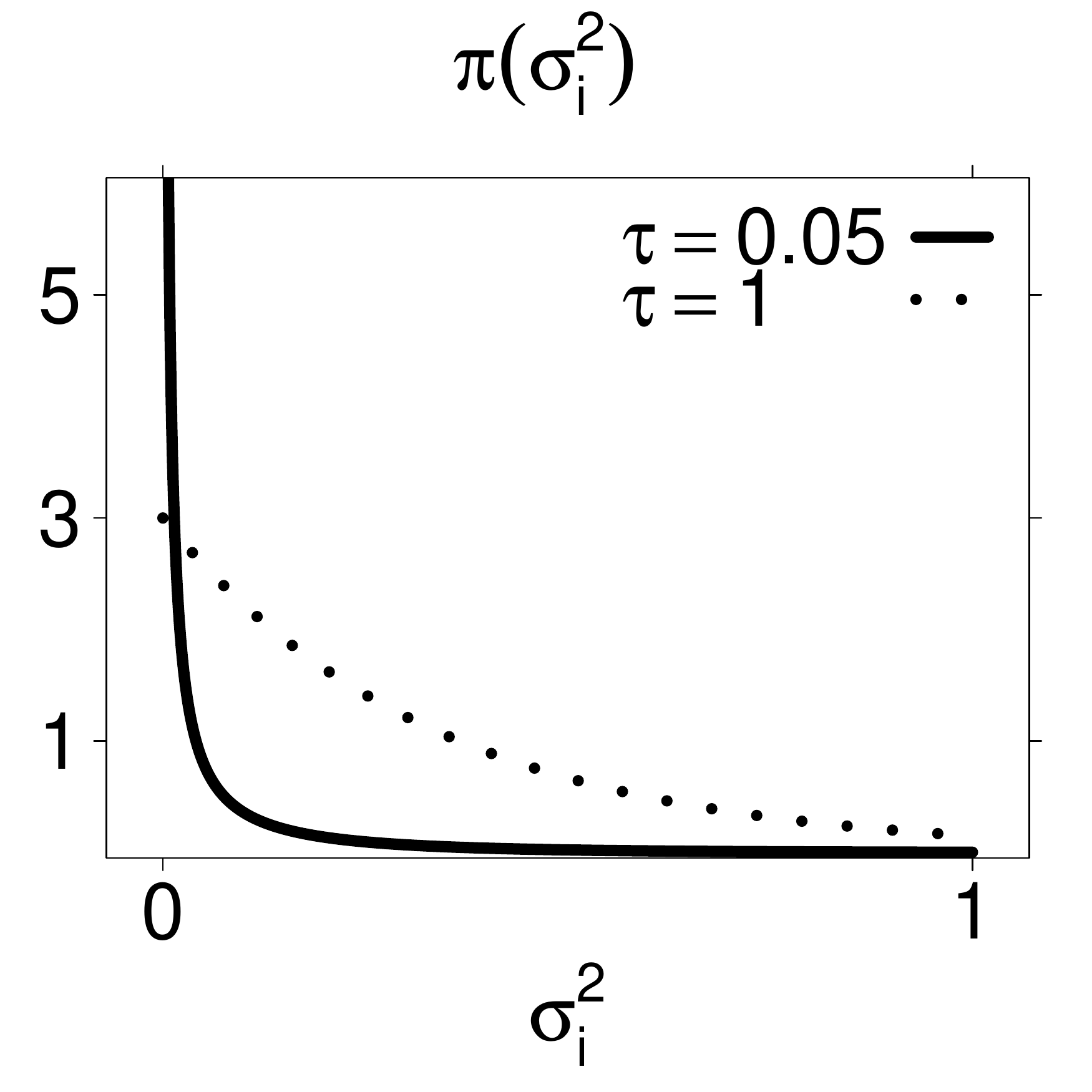}
}
{
\label{fig:theta-hs}
\includegraphics[width = 0.3\textwidth]{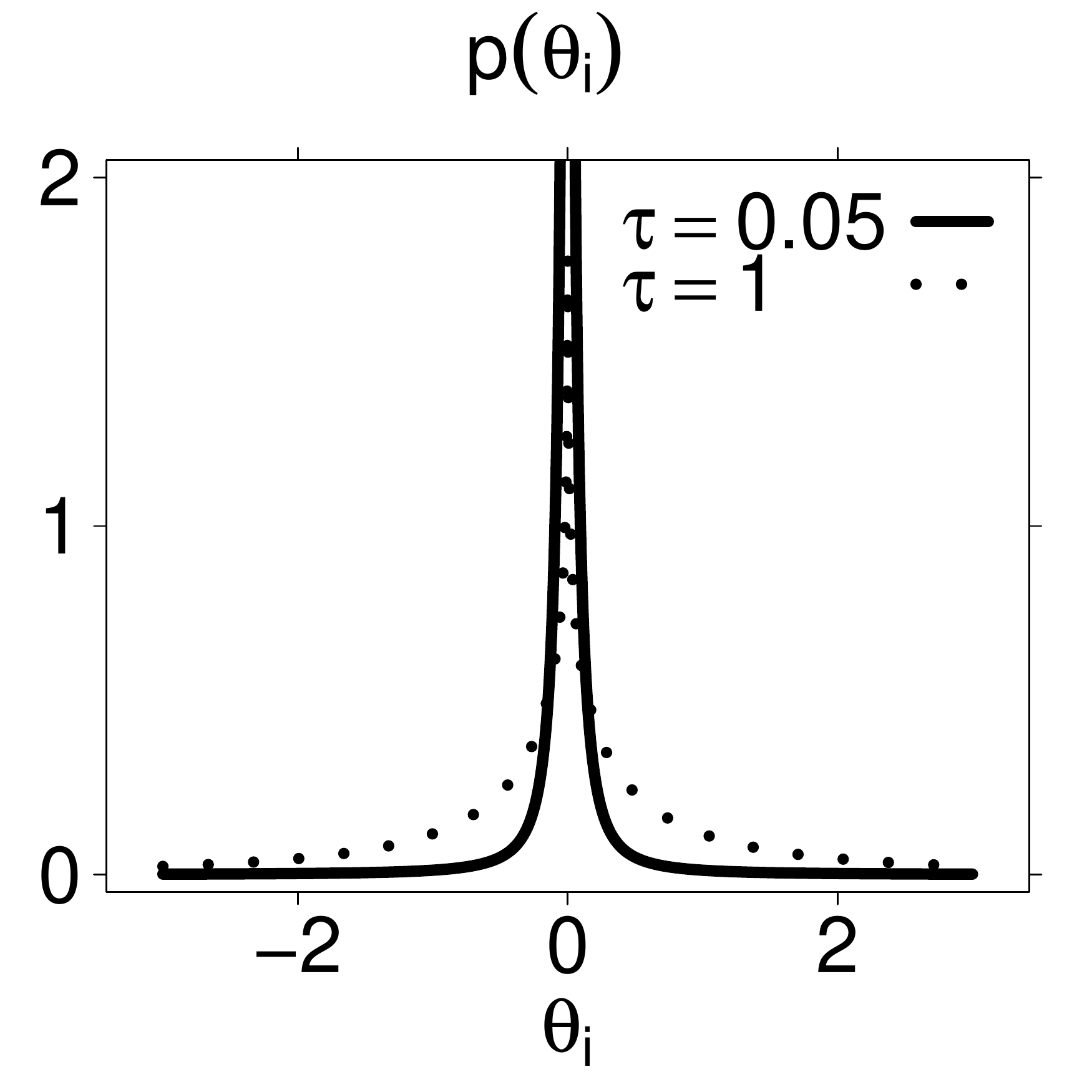}
}
\subfigure
{
\label{fig:theta-ig}
\includegraphics[width = 0.3\textwidth]{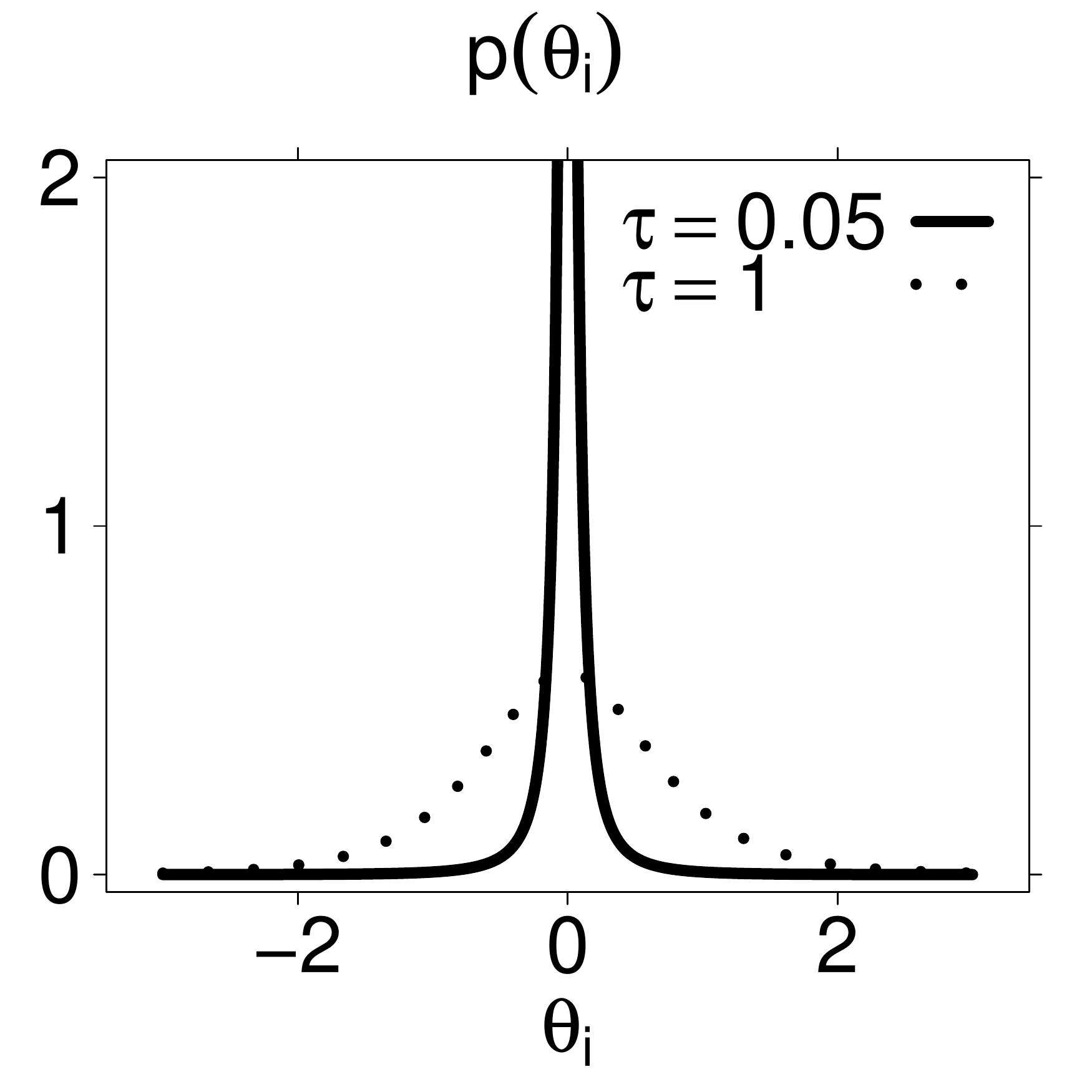}
}
{
\label{fig:theta-ng}
\includegraphics[width = 0.3\textwidth]{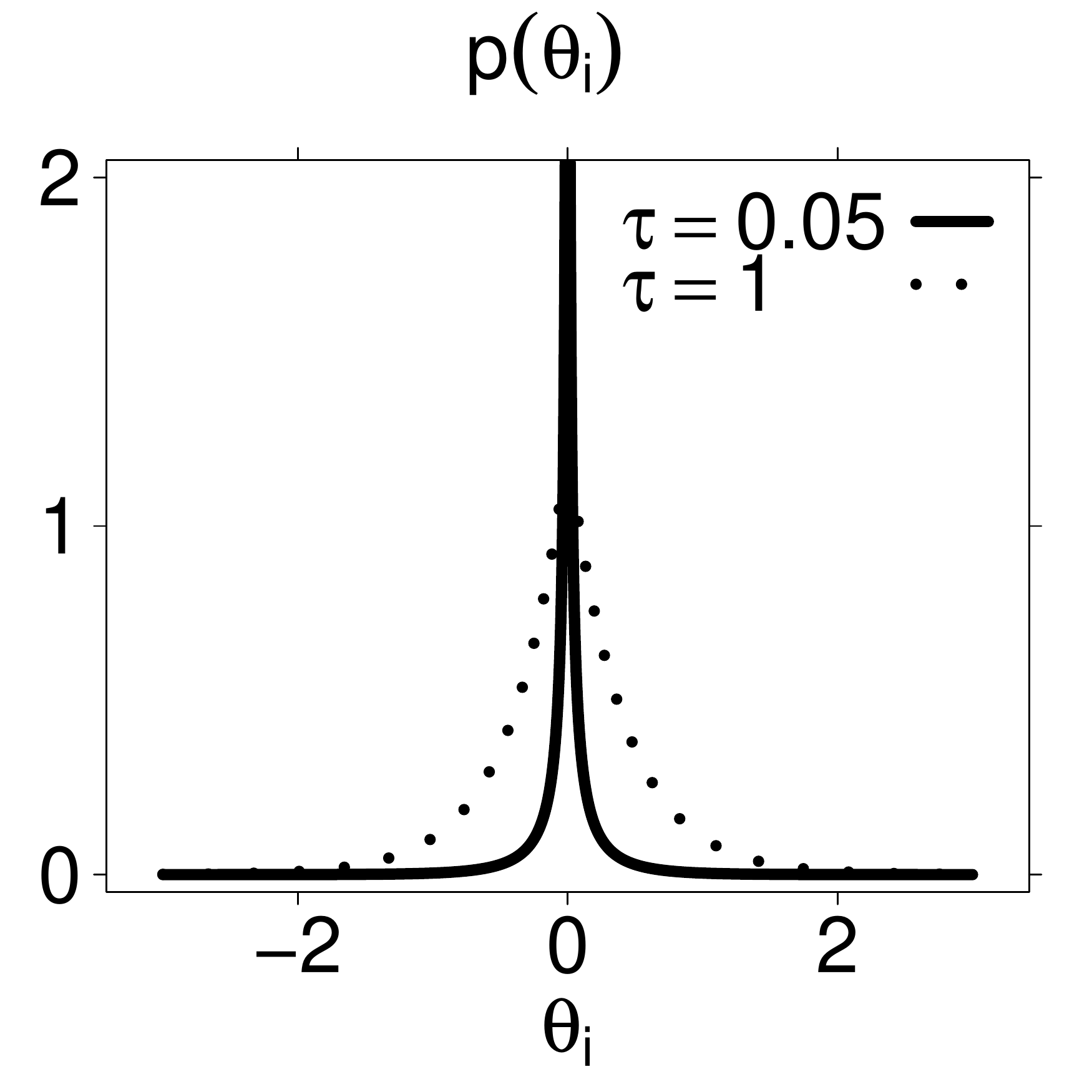}
}
\caption{Plots of priors on the local variance (first row) and the corresponding parameters (second row). From left to right: horseshoe, Inverse-Gaussian with $a = 1/2, b = 1$, and normal gamma with $\beta = 3$. The parameter $\tau$, which in practice should be of the order $p_n/n$, is taken equal to 1 (dashed line) and 0.05 (solid line).  } 
\label{fig:effecttau}
\end{center} 
\end{figure}

Figure \ref{fig:effecttau} presents plots of the priors $\pi$ on the local variance, and the corresponding priors on the parameters $\theta_i$, for three priors for which the three conditions are verified in Section \ref{sec:examples}: the horseshoe, inverse-Gaussian, and normal-gamma. The parameter $\tau$, in the notation of Section \ref{sec:examples}, should be thought of as the sparsity level $p_n/n$. Figure \ref{fig:effecttau} shows that the priors start to resemble each other when $\tau$ is decreased. If the setting is more sparse, corresponding to more zero means, the mass of the prior $\pi$ on $\sigma_i^2$ concentrates around zero, leading to a higher peak at zero in the prior density on $\theta_i$.

We now present our main result. The minimax estimation rate for this problem, under $\ell_2$ risk, is given by $2p_n\log(n/p_n)$  \cite{Donoho1992}. We write $\theta^0=(\theta_i^0)_{i=1,\ldots,n}$ and consider posterior concentration of the zero and non-zero coefficients separately. Asymptotics always refers to $n\rightarrow \infty.$ \\

\begin{thm}\label{thm:main}
Work under model $X^n \sim \mathcal{N}(\theta_0, I_n)$ and assume that the prior is of the form \eqref{eq:theprior}. Suppose further that $p_n = o(n)$ and let $M_n$ be an arbitrary positive sequence tending to $+\infty$. Under Condition \ref{cond1},
\begin{equation*}
\sup_{\theta_0 \in \ell_0[p_n]} \E_{\theta_0}\Pi\big( \theta \ : \sum_{i:\theta_i^0 \neq 0} (\theta_i - \theta_i^0)^2 > M_n p_n\log(n/p_n) \ \bigr| \ X^n \big)  \to 0
\end{equation*}
and
\begin{equation*}
\sup_{\theta_0 \in \ell_0[p_n]}  \E_{\theta_0} \sum_{i:\theta_i^0 \neq 0} (\widehat \theta_i - \theta_i^0)^2 \lesssim p_n\log(n/p_n).
\end{equation*}
Under Condition \ref{cond2} and Condition \ref{cond3} (or either Condition \ref{condA} or \ref{condB}),
\begin{equation*}
\sup_{\theta_0 \in \ell_0[p_n]} \E_{\theta_0}\Pi\big( \theta \ : \sum_{i:\theta_i^0 = 0} \theta_i^2 > M_n p_n\log(n/p_n) \ \bigr| \ X^n \big)  \to 0
\end{equation*}
and
\begin{equation*}
\sup_{\theta_0 \in \ell_0[p_n]}  \E_{\theta_0} \sum_{i:\theta_i^0 = 0} \widehat \theta_i ^2 \lesssim p_n\log(n/p_n).
\end{equation*}
Thus, under Conditions \ref{cond1}-\ref{cond3} (or Condition \ref{cond1} with either Condition \ref{condA} or \ref{condB}),
\begin{equation*}
\sup_{\theta_0 \in \ell_0[p_n]} \E_{\theta_0}\Pi\big( \theta \ : \|\theta - \theta_0\|^2 > M_n p_n\log(n/p_n) \ \bigr| \ X^n \big) \to 0
\end{equation*}
and
\begin{equation*}
\sup_{\theta_0 \in \ell_0[p_n]}  \E_{\theta_0} \big\|\widehat \theta-\theta_0\big\|_2^2 \lesssim p_n\log(n/p_n).
\end{equation*}
\end{thm}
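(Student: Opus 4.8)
Since the prior \eqref{eq:theprior} keeps the coordinates independent \emph{a posteriori}, the plan is to reduce the whole statement to one--dimensional estimates and then sum over coordinates. Conditionally on $X^n$, Markov's inequality gives $\Pi\big(\sum_{i\in S}(\theta_i-\vartheta_i)^2>t\mid X^n\big)\le t^{-1}\sum_{i\in S}\E[(\theta_i-\vartheta_i)^2\mid X_i]$ for any index set $S$, so, writing $v(x)$ for the posterior variance of a single coordinate at $X=x$, it suffices to prove the per--coordinate bounds $\sup_{\vartheta\in\mathbb R}\E_\vartheta[(Xm_X-\vartheta)^2+v(X)]\lesssim\log(n/p_n)$ under Condition~\ref{cond1} (with $X\sim\mathcal N(\vartheta,1)$), and $\E_0[(1+X^2)m_X]\lesssim s_n$ under Conditions~\ref{cond2}--\ref{cond3} (with $X\sim\mathcal N(0,1)$). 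These suffice because $\E[(\theta_i-\vartheta)^2\mid x]=v(x)+(xm_x-\vartheta)^2$, while $\E[\theta_i^2\mid x]\le(1+x^2)m_x$: conditionally on $\sigma_i^2$ the coordinate is Gaussian with variance $z:=\sigma_i^2/(1+\sigma_i^2)\le1$ and mean $zx$, so $m_x=\E[z\mid x]$ and $\E[\theta_i^2\mid x]=\E[z\mid x]+x^2\E[z^2\mid x]\le(1+x^2)\E[z\mid x]$; the two risk quantities $(xm_x)^2$ and $(xm_x-\vartheta)^2$ are dominated by the corresponding posterior second moments. Summing over the at most $p_n$ signal coordinates and the at most $n$ null coordinates (and using $n\,s_n=p_n\log(n/p_n)$) yields the first two pairs of displays; the last pair follows from $\|\theta-\theta_0\|^2=\sum_{i:\theta^0_i\neq0}(\theta_i-\theta^0_i)^2+\sum_{i:\theta^0_i=0}\theta_i^2$ together with a union bound (with $M_n$ replaced by $M_n/2$) and additivity of expectations, so no new argument is needed there.

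The analytic core is a careful study of the ratio $m_x$ in \eqref{eq:defmx} and of $v(x)=\E[z\mid x]+x^2\Var(z\mid x)$. The plan is to split the defining $u$--integrals at the thresholds $s_n$, $1$ and $b_n^2$ appearing in Condition~\ref{cond3}, and to split the outer expectation over $X$ at a level of order $\sqrt{\log(n/p_n)}$ into a bulk part and a Gaussian tail, the latter handled by $m_X\le1$ and a standard Gaussian tail estimate. The basic tool for the null coordinates is the identity
\[
1-m_x=\frac{\int_0^\infty(1+u)^{-3/2}e^{x^2u/(2+2u)}\pi(u)\,du}{\int_0^\infty(1+u)^{-1/2}e^{x^2u/(2+2u)}\pi(u)\,du},
\]
which makes transparent that $m_x$ is small precisely when the prior mass sits near $u=0$: Condition~\ref{cond2} bounds the denominator of $m_x$ from below, Condition~\ref{cond3} bounds the numerators governing $\E_0[X^2m_X]$ and $\E_0[m_X]$ from above, and the term $\int_{s_n}^1 u\,\pi(u)\,du$ enters exactly in controlling the moderate--$u$ contribution. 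Conditions~\ref{condA} and \ref{condB} will be dispatched by checking directly that each implies the estimate of Condition~\ref{cond3} when $p_n=o(n)$.

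For the signal coordinates I would write $\E_\vartheta[(Xm_X-\vartheta)^2]\le2\E_\vartheta[X^2(1-m_X)^2]+2$. When $|\vartheta|\lesssim\sqrt{\log(n/p_n)}$ this is already $\le2\E_\vartheta[X^2]+2=2(1+\vartheta^2)+2\lesssim\log(n/p_n)$, regardless of $m_X$; the role of Condition~\ref{cond1} is to guarantee that for larger $|\vartheta|$ the factor $1-m_X$ compensates the growth of $X^2$, keeping the bound of order $\log(n/p_n)$. Concretely one bounds $1-m_x$ from above via the identity above: the weight $e^{x^2u/(2+2u)}$ concentrates the posterior on $u$ around $u\asymp|x|$, and the tail lower bound \eqref{eq.assump_on_lb_Ln} ensures $\pi$ retains enough mass there to force the denominator of $m_x$ to dominate its numerator by the needed margin, while the uniform regular variation \eqref{eq.unif_reg_vary_at_zero_prop} lets one absorb the $e^{-bu}$ factor and make the estimate uniform over $\vartheta$. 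A parallel computation controls $\E_\vartheta[v(X)]$, using that $\Var(z\mid x)\to0$ as $|x|\to\infty$. Finally, Lemma~\ref{lem:cond1prime} reduces Condition~\ref{cond1prime} to Condition~\ref{cond1}, so the global--local case \eqref{eq:globallocal} is covered automatically.

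I expect the main obstacle to be the null--coordinate analysis under Condition~\ref{cond3}: obtaining the \emph{sharp} order--$s_n$ bound on $\E_0[X^2m_X]$ requires tracking the three $u$--regimes $[0,s_n]$, $[s_n,b_n^2]$ (with the further split at $1$) and $[b_n^2,\infty)$ together with the bulk/tail split of $X$, and in particular showing that the $u\in[s_n,1]$ piece is genuinely $O(s_n)$ rather than larger --- the delicate point whose sharpness is exactly what Theorem~\ref{thm.necessary} records. A secondary but pervasive difficulty is keeping every per--coordinate bound uniform over $\ell_0[p_n]$, i.e.\ over all admissible $\vartheta\in\mathbb R$; this is where the $n$--independence of the constants $R,u_0$ in Condition~\ref{cond1} is essential.
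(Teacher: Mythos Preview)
Your outline is correct and follows essentially the same route as the paper: reduce via posterior independence and Markov's inequality to per--coordinate second moments, bound the null coordinates through $\E_0[(1+X^2)m_X]\lesssim s_n$ under Conditions~\ref{cond2}--\ref{cond3} (the paper treats bias and variance in separate lemmas but using the same inequality $\Var(\theta\mid x)\le(1+x^2)m_x$ you derive), and handle the signal coordinates under Condition~\ref{cond1}. Two small clarifications are worth making. First, the paper's signal argument does not split on $|\vartheta|$; instead it proves the \emph{pointwise} estimate $\sup_{x}|x(m_x-1)|\lesssim 1+\sqrt{\log(n/p_n)}$, which immediately gives $\E_\vartheta[X^2(1-m_X)^2]\lesssim\log(n/p_n)$ uniformly in $\vartheta$ and is the cleaner target to aim for. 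Second, your heuristic that the weight $e^{x^2u/(2+2u)}$ concentrates the posterior ``around $u\asymp|x|$'' is off by a square: in the variable $z=1/(1+u)$ the weight is $e^{-x^2z/2}$, which localises near $z\asymp x^{-2}$, i.e.\ $u\asymp x^2$; the paper carries out the decisive three--piece split of the integral in this $z$--variable (using $g(z)=z^{-2}\pi(1/z-1)$), which is where the uniform regular variation and the lower bound~\eqref{eq.assump_on_lb_Ln} enter exactly as you anticipate.
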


The statement is split into zero and non-zero coefficients of $\theta_0$ in order to make the dependence on the conditions explicit. Indeed, posterior concentration of the non-zero coefficients follows from Condition \ref{cond1} and posterior concentration for the zero-coefficients is a consequence of Conditions \ref{cond2} and \ref{cond3}. It is well-known that posterior concentration at rate $\epsilon_n$ implies existence of a frequentist estimator with the same rate (cf. \cite{Ghosal2000}, Theorem 2.5). Thus, the rate of contraction around the true mean vector $\theta_0$ must be sharp. This also means that credible sets computed from the posterior cannot be too so large as to be uninformative, an effect that, as discussed in the introduction, occurs for the Laplace prior connected to the Lasso. If one wishes to use a credible set centered around the posterior mean, then its radius might still be too small to cover the truth. The first step towards guarantees on coverage is a lower bound on the posterior variance. Such a lower bound was obtained for the horseshoe in \cite{vanderPas2014}, and for priors very closely resembling the horseshoe in \cite{Ghosh2015}. No such results have been obtained so far for priors on $\sigma_i^2$ that have a tail of a different order than $(\sigma_i^2)^{-3/2}$. This is a delicate technical issue that we will not pursue further here.

The results also indicates how to build adaptive procedures. The method does not require explicit knowledge of $p_n$ but in order to get minimax concentration rates, we need to find priors that satisfy the conditions of Theorem \ref{thm:main}. Consider for example the prior defined as
\begin{align*}
	\pi(u) :=\frac{1}{u^{3/2}} \frac{\sqrt{\log n}}{n} , \quad \text{for all} \ u\geq \frac{\sqrt{\log n}}{n}
\end{align*}
and the remaining mass is distributed arbitrarily on the interval $[0, \sqrt{\log n}/n).$ Thus Condition \ref{condA} holds for any $1\leq p_n=o(n)$ and thus also Condition \ref{cond2} and Condition \ref{cond3}. Whenever we impose an upper bound $p_n\leq n^{1-\delta}$ with $\delta>0,$ then also Condition \ref{cond1} holds and thus Theorem \ref{thm:main} follows. This shows that in principle priors can be constructed that adapt over the whole range of possible sparsity levels and lead to some theoretical guarantee. From a practical point, however, these methods shrink to much and have to little mass in the tails. A better procedure would be to get a rough estimate of the relative sparsity $p_n/n$ in a first step and then to use a prior that lies on the "boundary" of the conditions in the sense that the both sides in the inequality of Condition \ref{cond3} are of the same order. An empirical Bayes procedure that first estimates the sparsity was found to work well in \cite{vanderPas2014}, arguing along the lines of \cite{Johnstone2004}. The sparsity level estimator counts the number of observations that are larger than the `universal threshold' of $\sqrt{2\log n}.$ Similar results are likely to hold in our setting, as long as the posterior mean is monotone in the parameter that is taken to depend on $p_n$.

\subsection{Necessary conditions}
\label{sec.necessary}

The imposed conditions are nearly sharp. To see this, consider the Laplace prior, where each $\theta_i$ is drawn independently from a Laplace distribution with parameter $\lambda.$ It is well-known that the Laplace distribution with parameter $\lambda$ can be represented as a scale mixture of normals where the mixing density is exponential with parameter $\lambda^2$ (cf. \cite{andrews1974} or \cite{park2008}, Equation (4)). Thus, the Laplace prior fits our framework \eqref{eq:theprior} with $\pi(u)=\lambda^2e^{-\lambda^2 u},$ for $u\geq 0.$ As mentioned in the introduction, the MAP-estimator of this prior is the Lasso but the full posterior does not shrink at the minimax rate. Indeed, Theorem 7 in \cite{Castillo2015} shows that if the true vector is zero, then, the posterior concentration rate has the lower bound $n/\lambda^2$ for the squared $\ell^2$-norm provided that $1\leq \lambda =o(\sqrt{n}).$ This should be compared to the optimal minimax rate $\log n$ (the rate for sparsity zero is the same as the rate for sparsity $p_n=1$). Thus, the lower bound shows that the rate is sub-optimal as long as 
\begin{align}
	\lambda \ll \sqrt{\frac{n}{\log n}}.
	\label{eq.LASSO_cond}
\end{align}
If $\lambda \gtrsim \sqrt{n/\log n},$ the lower bound is not sub-optimal anymore, but in this case, the non-zero components cannot be recovered with the optimal rate. The lower bound shows that the posterior does not shrink enough if $\lambda$ is not taken to be huge and thus either Condition \ref{cond2} or Condition \ref{cond3} must be violated, as these are the two conditions that guarantee shrinkage of the zero mean coefficients. 

Obviously, $\int_0^1 \pi(u) du \geq \int_0^1 e^{-u}du>0$ for $1\leq \lambda$ and thus Condition \ref{cond2} holds. For Condition \ref{cond3} notice that the integral can be split into the integral $\int_0^1 u\pi(u)du $ plus an integral over $[1,\infty)$ Now, if $\lambda$ tends to infinity faster than a polynomial order in $n$ then the integral over $[1,\infty)$ is exponentially small in $n.$ Thus Condition \ref{cond3} must fail because the integral over $\int_{s_n}^1u\pi(u) du$ is of a larger order than $s_n=n^{-1}\log n.$ To see this, observe that for $\lambda \leq \sqrt{n/\log n},$
\begin{align*}
	\int_{s_n}^1u \lambda^2 e^{-\lambda^2 u} du = \frac{1}{\lambda^2} \int_{s_n\lambda^2}^{\lambda^2} ve^{-v} dv
	\geq \frac{1}{\lambda^2} \int_1^{\lambda^2}e^{-v} dv \gtrsim \frac{1}{\lambda^2}.
\end{align*}
Now, we see that Condition \ref{cond3} fails if and only if \eqref{eq.LASSO_cond} holds. Indeed, if $\lambda \ll \sqrt{n/\log n},$ then the r.h.s. is of larger order than $s_n$ and if $\lambda \asymp \sqrt{n/\log n},$ then, Condition \ref{cond3} holds. This shows that this bound is sharp. 

In order to state this as a formal result, let us introduce the following modification of Condition \ref{cond3}. Let $\kappa_n$ denote an arbitrary positive sequence.\\

\begin{condprime}{3($\kappa_n$)}\label{cond3modified}
Let $b_n=\sqrt{\log(n/p_n)}$ and assume that there is a constant $C,$ such that
\begin{align*}
	\kappa_n \int_{s_n}^1 u\pi(u) du +\int_{1}^\infty \Big( u \wedge \frac{b_n^3}{\sqrt{u}} \Big) \pi(u) du
	+ b_n \int_1^{b_n^2} \frac{\pi(u)}{\sqrt{u}} du
	\leq C s_n.
\end{align*}
\end{condprime}
In particular, we recover Condition \ref{cond3} for $\kappa_n=1.$\\

\begin{thm}
\label{thm.necessary}
Work under model $X^n \sim \mathcal{N}(\theta_0, I_n)$ and assume that the prior is of the form \eqref{eq:theprior}. For any positive sequence $(\kappa_n)_n$ tending to zero, there exists a prior $\pi$ satisfying Condition \ref{cond2} and Condition \ref{cond3modified} for $p_n=1$ and a positive sequence $(M_n)_n$ tending to infinity, such that
\begin{equation}\label{eq.thm_nec_concl}
	\E_{\theta_0=0} \Pi\big( \theta \ : \|\theta\|_2^2 \leq M_n \log(n) \ \bigr| \ X^n \big) \to 0, \quad \text{as} \ \ n\to \infty.
\end{equation}
\end{thm}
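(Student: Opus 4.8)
The plan is to construct, for the given sequence $\kappa_n\downarrow 0$, an explicit prior that satisfies the relaxed tail bound but whose posterior puts vanishing mass near $0$. For $p_n=1$ we have $s_n=n^{-1}\log n$ and $b_n=\sqrt{\log n}$. I would set
\[
v_n:=\min\Big\{\frac{s_n}{\kappa_n},\,1\Big\},\qquad
\pi(u):=\tfrac12\,g_n(u)+\tfrac12\,h_n(u),
\]
where $g_n$ is the uniform density on $[0,s_n]$ and $h_n$ the uniform density on $[v_n/2,v_n]$: half the mass is squeezed into $[0,s_n]$ and half sits on a bump at scale $v_n$. Since $\pi$ is supported in $[0,1]$ (as $v_n\le1$) we get $\int_0^1\pi(u)\,du=1$, so Condition \ref{cond2} holds, and the second and third integrals in Condition \ref{cond3modified} vanish identically. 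For $n$ large enough that $v_n\ge 2s_n$ (which holds eventually because $\kappa_n\to0$) the bump lies inside $[s_n,1]$, so $\int_{s_n}^1 u\,\pi(u)\,du=\tfrac38 v_n$ and hence $\kappa_n\int_{s_n}^1 u\,\pi(u)\,du=\tfrac38\kappa_n v_n\le\tfrac38 s_n$, so Condition \ref{cond3modified} holds with $C=\tfrac38$. The mechanism is that $\kappa_n\to0$ forces $n v_n=\min\{\log n/\kappa_n,\,n\}\gg\log n$, so a fixed fraction of the prior sits at scales that are far too large for the zero vector.

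To exploit this, I would work on the event $\mathcal E_n:=\{\max_{i\le n}|X_i|\le\sqrt{4\log n}\}$, which has $P_0(\mathcal E_n^c)\le n\,e^{-2\log n}\to0$. A posteriori the $\theta_i$ are independent with $\theta_i\mid X_i=x,\sigma_i^2=v\sim\mathcal N\big(\tfrac{v}{1+v}x,\tfrac{v}{1+v}\big)$. Let $w(x)$ be the posterior probability that $\sigma_i^2$ falls in the bump $[v_n/2,v_n]$. Using the marginalisation behind \eqref{eq:defmx}, $w(x)=N_h(x)/(N_h(x)+N_g(x))$, where $N_h(x)=\tfrac12\int h_n(u)(1+u)^{-1/2}e^{x^2u/(2+2u)}\,du\ge\tfrac12(1+v_n)^{-1/2}\ge\tfrac1{2\sqrt2}$ and $N_g(x)\le\tfrac12 e^{x^2 s_n/2}\le1$ on $\mathcal E_n$ for $n$ large, because $x^2 s_n/2\le 2(\log n)^2/n\to0$. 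Hence $w(X_i)\ge c_0:=(1+2\sqrt2)^{-1}$ for every $i$ on $\mathcal E_n$. Since $\E[\theta_i^2\mid X_i=x,\sigma_i^2=v]\ge\tfrac{v}{1+v}\ge\tfrac{v_n}{4}$ on the bump, this gives $\E[\theta_i^2\mid X_i]\ge c_0 v_n/4$, and therefore
\[
\E\big[\|\theta\|^2\mid X^n\big]=\sum_{i=1}^n\E[\theta_i^2\mid X_i]\ \ge\ \frac{c_0}{4}\,n\,v_n\qquad\text{on }\mathcal E_n.
\]

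Finally I would upgrade this first-moment bound to a statement about posterior mass via Chebyshev's inequality. Since $\sigma_i^2\le1$ on $\mathrm{supp}(\pi)$ we have $\tfrac{v}{1+v}\le v_n$, so on $\mathcal E_n$ the (noncentral) chi-square formulas give $\E[\theta_i^2\mid X_i=x,\sigma_i^2=v]\lesssim v_n\log n$ and $\Var(\theta_i^2\mid X_i=x,\sigma_i^2=v)\lesssim v_n^2\log n$; by the law of total variance $\Var(\theta_i^2\mid X_i)\lesssim v_n^2(\log n)^2$ and $\Var(\|\theta\|^2\mid X^n)\lesssim n\,v_n^2(\log n)^2$. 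Taking $M_n:=\tfrac{c_0}{16}\cdot\tfrac{n v_n}{\log n}$, we have $M_n\to\infty$ (because $n v_n\gg\log n$) while on $\mathcal E_n$ it holds $M_n\log n\le\tfrac14\E[\|\theta\|^2\mid X^n]$, so
\[
\Pi\big(\|\theta\|^2\le M_n\log n\mid X^n\big)\le\frac{\Var(\|\theta\|^2\mid X^n)}{\big(\tfrac34\E[\|\theta\|^2\mid X^n]\big)^2}\ \lesssim\ \frac{(\log n)^2}{n}\ \longrightarrow\ 0
\]
on $\mathcal E_n$; bounding the posterior probability by $1$ on $\mathcal E_n^c$ and using $P_0(\mathcal E_n^c)\to0$ then yields \eqref{eq.thm_nec_concl}. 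I expect the main obstacle to be the bookkeeping that keeps Condition \ref{cond3modified} valid while making $nv_n$ as large as possible: $v_n=\min\{s_n/\kappa_n,1\}$ is precisely the largest scale for which the $\kappa_n$-weighted integral stays at level $s_n$, and it is forced to grow faster than $\log n$ exactly because $\kappa_n\to0$; conditioning on $\mathcal E_n$ is what tames the marginal factors $e^{x^2u/(2+2u)}$ and makes the moment estimates elementary.
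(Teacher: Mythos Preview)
Your argument is correct and takes a genuinely different route from the paper's proof. The paper simply picks the Laplace prior with parameter $\lambda=\sqrt{s_n/\kappa_n}$, verifies Conditions \ref{cond2} and \ref{cond3modified} by elementary integration, and then invokes Theorem~7 of \cite{Castillo2015} as a black box to obtain the lower bound \eqref{eq.thm_nec_concl} with $M_n=1/\kappa_n$. Your construction is different: you place a uniform bump of mass $\tfrac12$ at scale $v_n=\min(s_n/\kappa_n,1)$, which is calibrated to be exactly as large as the relaxed condition permits, and then you prove the posterior non-concentration from first principles by lower-bounding the posterior weight on the bump uniformly on a high-probability event and applying Chebyshev to $\|\theta\|_2^2$. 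The paper's proof is much shorter but depends on a non-trivial external lower-bound result; your proof is self-contained and makes the mechanism transparent (a macroscopic fraction of posterior mass sits at variance $\asymp v_n$, forcing $\E[\|\theta\|^2\mid X^n]\gtrsim n v_n\gg\log n$). One minor remark: your claim $\E[\theta_i^2\mid x,v]\lesssim v_n\log n$ uses $1\lesssim\log n$, which is fine asymptotically; similarly the verification of Condition~\ref{cond3modified} with $C=\tfrac38$ holds only for $n$ large, but since $\int_{s_n}^1 u\,\pi(u)\,du\le 1$ always and $\kappa_n$ is bounded, a single constant $C$ covers all $n$.
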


This theorem shows that the posterior puts asymptotically all mass outside an $\ell^2$-ball with radius $ M_n \log(n)\gg \log (n)$ and is thus suboptimal. The proof can be found in the appendix.

\section{\label{sec:examples}Examples}
In this section, Conditions \ref{cond1}-\ref{cond3} are verified for the  horseshoe-type priors considered by \cite{Ghosh2015} (which includes the horseshoe and the normal-exponential gamma), the horseshoe+, the inverse-Gaussian prior,  the normal-gamma prior, and the spike-and-slab Lasso. There are, to the best of our knowledge, no existing results yet showing that the horseshoe+, the inverse-Gaussian and the normal-gamma priors lead to posterior contraction at the minimax estimation rate. Posterior concentration for the horseshoe and horseshoe-type priors were already established in \cite{vanderPas2014} and \cite{Ghosh2015}, and for the spike-and-slab Lasso in \cite{Rockova2015} . Here, we obtain the same results but thanks to Theorem \ref{thm:main} the proofs become extremely short. In addition, we can show that a restriction on the class of priors considered by \cite{Ghosh2015} can be removed.

\subsection{\label{sec:exampleGhosh} Global-local scale mixtures of normals}
\label{subsec:ex:ghosh}
In \cite{Ghosh2015}, the priors under consideration are normal priors with random variances of the form
\begin{equation*}
\theta_i \mid \sigma_i^2, \tau^2 \sim \mathcal{N}(0, \sigma_i^2\tau^2), \quad \sigma_i^2 \sim \pi'(\sigma_i^2), \quad i = 1, \ldots, n,
\end{equation*}
for priors $\pi'$ with density given by
\begin{equation} \label{eq:Ghoshform}
\pi'(\sigma_i^2) = K\frac{1}{ (\sigma_i^2)^{a+1}}L(\sigma_i^2),
\end{equation}
where $K>0$ is a constant and $L : (0, \infty) \to (0, \infty)$ is a non-constant, \emph{slowly varying} function, meaning that there exist $c_0, M \in (0, \infty)$ such that $L(t) > c_0$ for all $t \geq t_0$ and $\sup_{t \in (0, \infty)} L(t) \leq M$. \cite{Ghosh2015} prove an equivalent of Theorem \ref{thm:main} for these priors, for $a \in [1/2, 1)$ and  $\tau =(p_n/n)^\alpha$ with $\alpha\geq 1$.

The horseshoe prior, with $\pi(u) = (\pi\tau)^{-1}u^{-1/2}(1 + u/\tau^2)^{-1}$,  is contained in this class of priors, by taking $a = 1/2$, $L(t) = t/(1+t)$, and $K = 1/\pi$. This class also contains the normal-exponential-gamma priors of \cite{Griffin2005}, for which $\pi(u) = \lambda/\gamma^2(1 + u/\gamma^2)^{-(\lambda+1)}$ with parameters $\lambda, \gamma>0.$ This class of priors is of the form \eqref{eq:Ghoshform} for the choice $\tau = \gamma$, $a = \lambda$ and $L(t) = (t/(1+t))^{1+\lambda}$. In \cite{Ghosh2015}, it is stated that the three parameter beta normal mixtures, the generalized double Pareto, the inverse gamma and half-$t$ priors are of the form \eqref{eq:Ghoshform} as well.

The global-local scale prior is of the form \eqref{eq:theprior} with
\begin{align*}
\pi(u) = \frac{K\tau^{2a}} {u^{1+a}} L\Big(\frac u{\tau^2}\Big).
\end{align*}
We assume that the polynomial decay in $u$ is at least of order $3/2,$ that is $a\geq \tfrac 12.$ In particular, the horseshoe lies directly at the boundary in this sense. Depending on $a,$ we allow for different values of $\tau.$ If $\tfrac 12 \leq a< 1,$ we assume $\tau^{2a}\leq (p_n/n)\sqrt{\log(n/p_n)};$ if $a=1,$ we assume $\tau^{2}\leq p_n/n;$ and if $a>1,$ we assume $\tau^{2}\leq (p_n/n)\log(n/p_n).$

Below, we check Conditions \ref{cond1}-\ref{cond3}.

{\it Condition 1':} It is enough to show that $\pi'$ is a uniformly regular varying function. Notice that $L$ is uniformly regular varying and satisfies \eqref{eq.unif_reg_vary_at_zero_prop} with $R=M/c_0$ and $z_0=t_0.$ If two functions are uniformly regular varying, then also their product, and thus $\pi'$ is uniformly regular varying.

{\it Condition 2:} Because of $p_n=o(n),$ $\tau^2 \rightarrow 0.$ Observe that $u \geq t_0\tau^2$ implies $L(u/\tau^2) \geq c_0$ and thus
\begin{align*}
\int_0^1 \pi(u) du &\geq \int_{t_0\tau^2}^{(t_0+1)\tau^2} \pi(u) du \geq  \int_{t_0\tau^2}^{(t_0+1)\tau^2} \frac{c_0K\tau^{2a}}{u^{1+a}}du = \frac{c_0 K}{(t_0 + 1)^{1+a}}.
\end{align*}

{\it Condition 3:}
Since $L$ is bounded in sup-norm by $M,$ and $s_n \geq \tau^2,$ we find that $\pi(u)\leq KM\tau^{2a}u^{-1-a} ,$ for all $u\geq s_n.$ With this bound, it is straightforward to verify Condition \ref{cond3}.

Thus, we can apply Theorem \ref{thm:main}. \qed

In particular, the posterior concentration theorem holds even more generally than shown by \cite{Ghosh2015}, as the restriction $a < 1$ can be removed. Thus, for example, we recover Theorem 3.3 of \cite{vanderPas2014} and in addition, find that  the normal-exponential-gamma prior of \cite{Griffin2005} contracts at at most the minimax rate for $\gamma = p_n/n$ and any $\lambda \geq 1/2$.

\subsection{The inverse-Gaussian prior}
Caron and Doucet \cite{Caron2008} propose to use the inverse-Gaussian distribution as prior for $\sigma^2.$ For positive constants $b$ and $\tau$ the variance $\sigma^2$ is drawn from an inverse Gaussian distribution with mean $\sqrt{2}\tau$ and shape parameter $\sqrt{2b}$. Thus the prior on the components is of the form \eqref{eq:theprior} with
\begin{equation*}
	\pi(u) = \frac{C_{b,\tau} \tau}{ u^{3/2}} e^{-\frac{ \tau^2}{u} - b u},
\end{equation*}
where $C_{b,\tau} =  e^{2\sqrt{b}\tau}/\sqrt{\pi}$ is the normalization factor. (In the notation of \cite{Caron2008}, this corresponds to reparametrizing $\gamma = \sqrt{2b},$ $\alpha/n = \sqrt{2}\tau,$ and $K=n$ is the dimension of the unknown mean vector.) As $\tau$ becomes small the distribution is concentrated near zero. \cite{Caron2008} suggests to take $\tau$ proportional to $1/n,$ and we find that optimal rates can be achieved if $(p_n/n)^K\lesssim \tau \leq (p_n/n) \sqrt{\log (n/p_n)}$ for some $K>1.$

Below we verify Condition \ref{cond1} and Condition \ref{condA}, which together imply Theorem \ref{thm:main}. The inverse-Gaussian prior does not fit within the class considered by \cite{Ghosh2015}, because of the additional exponential factors. 

{\it Condition 1:} For $u\geq 1,$ $e^{-1} \leq e^{-\tau^2/u}\leq 1.$ Thus, $u\mapsto e^{-\tau^2/u}$ is uniformly regular varying with constants $R = e$ and $z_0=1.$ Since products of uniformly regular varying functions are again uniformly regular varying, we can write $\pi(u) = L_n(u) e^{-bu}$ with $L_n$ uniformly regular varying. 

For $u\geq 1,$ $\pi(u) \geq \pi^{-1/2} e^{-1} \tau u^{-3/2} e^{-bu},$ using the explicit expression for the constant $C_{b,\tau}.$ Thus, \eqref{eq.assump_on_lb_Ln} holds with $b'>b,$ $K=\alpha,$ $z_*=1,$ and $C'$ a sufficiently large constant.

{\it Condition A:} Observe that $\pi(u)\leq C_{b,1} \tau u^{-3/2}.$ 

Hence, the statement of Theorem \ref{thm:main} follows. \qed

\subsection{The horseshoe+ prior}
The horseshoe+  prior was introduced by \cite{Bhadra2015}. It is an extension of the horseshoe including an additional latent variable. A Cauchy random variable with parameter $\lambda$ that is conditioned to be positive is said to be half-Cauchy and we write $C^+(0,\lambda)$ for its distribution. The horseshoe+ prior can be defined via the hierarchical construction
\begin{equation*}
\theta_i \mid \sigma_i \sim \mathcal{N}(0, \sigma_i^2), \quad \sigma_i \mid \eta_i, \tau \sim C^+(0, \tau\eta_i), \quad \eta_i \sim C^+(0,1).
\end{equation*}
 and should be compared to the horseshoe prior
\begin{equation*}
\theta_i \mid \sigma_i \sim \mathcal{N}(0, \sigma_i^2), \quad \sigma_i \mid  \tau \sim C^+(0, \tau).
\end{equation*}
The additional variable $\eta_i$ allows for another level of shrinkage, a role which falls solely to $\tau$ in the horseshoe prior.  In \cite{Bhadra2015}, the claim is made that the horseshoe+ is an improvement over the horseshoe in several senses, but no posterior concentration results are known so far. With Theorem \ref{thm:main}, we can show that the horseshoe+ enjoys the same upper bound  on the posterior contraction rate as the horseshoe, if $(p_n/n)^K\lesssim \tau \lesssim (p_n/n)(\log(n/p_n))^{-1/2},$ for some $K>1.$

The horseshoe+ prior is of the form \eqref{eq:theprior} with
\begin{equation*}
\pi(u) = \frac{\tau}{\pi^2 } { \log(u/\tau^2) \over (u - \tau^2) u^{1/2}}.
\end{equation*}
Below, we verify  Conditions \ref{cond1}-\ref{cond3}.

{\it Condition 1:} Write  $\pi(u) = L_n(u),$ that is, $b = 0.$ Let us show that $L_n$ is uniformly regular varying. For that define $u_0 := 2$. For $u > u_0$, and $\tau^2 \leq 1$ we have $ u/2 \leq u-\tau^2 \leq u$, thus 
\begin{equation*}
\frac{1}{2}a^{-3/2} \frac{\log(u/\tau^2) + \log(a) }{\log(u/\tau^2)} \leq \frac{\pi(au)}{\pi(u)} \leq 2 a^{-3/2} \frac{\log(u/\tau^2) + \log(a) }{\log(u/\tau^2)}  .
\end{equation*}
Since 
\begin{equation*}
1 \leq \frac{\log(u/\tau^2) + \log(a) }{\log(u/\tau^2)} \leq 2, 
\end{equation*}
$L_n$ is regular varying. To check the second part of the assumption, observe that $\pi(u) \geq \pi^{-1} \tau u^{-3/2} \log(u/\tau^2)$. For any $K>\alpha$ and any $b' > 0$,
\begin{equation*}
\pi(u) e^{b'u} \gtrsim \tau \log(1/\tau) \geq \Big(\frac{p_n}{n}\Big)^K,  \quad \text{for all} \ u\geq u_0.
\end{equation*}
Thus, Condition \ref{cond1} holds.

{\it Condition 2:} Observe that
\begin{align*}
\int_0^1 \pi(u) du &\geq \frac{\tau}{\pi^2} \int_0^{\tau^2/2}  {\log(\tau^2/u) \over (\tau^2 - u) u^{1/2}} du
\geq \frac{\tau}{\pi^2} \frac{1}{(\tau^2/2)^{3/2}}\cdot \frac{\tau^2}{2} \log\tfrac{1}{2}
 \gtrsim 1.
\end{align*}

{\it Condition 3:} For any $u\geq s_n$ we can use $(u-\tau^2) \geq u/2.$ This shows that 
\begin{align*}
	\pi(u) \leq \frac{\tau\log (u)}{u^{3/2}} + 	\frac{\tau\log (1/\tau^2)}{u^{3/2}}, \quad \text{for all} \ u\geq s_n.
\end{align*}
In particular, $\pi(u)\lesssim \tau \log(n/p_n)/u^{3/2}$ for $s_n\leq u\leq b_n^2.$ For the integral on $[b_n^2, \infty),$ we use that $\tfrac{d}{du}-(\log(u)+1)/u = \log(u)/u^2.$ Together, Condition \ref{cond3} follows thanks to $\tau \lesssim (p_n/n)/\sqrt{\log(n/p_n)}.$

Thus, Theorem \ref{thm:main} can be applied. \qed

\subsection{Normal-gamma prior}
\label{sec:ex:norm:gamma}
The normal-gamma prior, discussed by \cite{Caron2008} and \cite{Griffin2010}, takes the following form for shape parameter $\tau>0$ and rate parameter $\beta>0$:
\begin{equation*}
\pi(u) = \frac{\beta^\tau}{\Gamma(\tau)} u^{\tau - 1}e^{-\beta u} = \frac{\tau \beta^\tau}{\Gamma(\tau+1)} u^{\tau - 1}e^{-\beta u}.
\end{equation*}
In \cite{Griffin2010}, it is observed that decreasing $\tau$ leads to a distribution with a lot of mass near zero, while preserving heavy tails. This is also illustrated in the right-most panels of Figure \ref{fig:effecttau}. The class of normal-gamma priors includes the double exponential prior as a special case, with $\tau = 1$. We now show that the normal-gamma prior satisfies the conditions of Theorem \ref{thm:main} for any fixed $\beta$, and for any $(p_n/n)^K \lesssim \tau \lesssim (p_n/n)\sqrt{\log(n/p_n)} \leq 1$ for some fixed $K$.

Below, we check Conditions \ref{cond1}-\ref{cond3}.

{\it Condition 1:} We define $L_n(u) = \frac{\beta^\tau}{\Gamma(\tau)} u^{\tau-1}$, so $\pi(u) = L_n(u)e^{-bu}$ with $b = \beta$. Note that since $\tau \to 0$, we have that  there exist a constant $C$ such that $C^{-1} \leq \beta^\tau \leq C$. We now prove that $L_n$ is regular varying. We have 
 $$\frac{L_n(au)}{L_n(u)} = a^{\tau-1}.$$
 and thus for all $a \in [1,2]$, $a^{-1} \leq L_n(au)/L_n(u) \leq 1$. 
  In addition for $u > u_*:=1$ we have, using $\Gamma(\tau+1)\geq \Gamma(1)=1$, 
  $$L_n(u) = \frac{\tau\beta^\tau}{\Gamma(\tau+1)} u^{\tau-1} 
  \geq \frac{(\beta \wedge 1)\tau}{\Gamma(2) u}
  \gtrsim \Big( \frac{p_n}n \Big)^K \frac 1u, $$
 implying $\pi(u) = L_n(u)u^{-1}e^{-\beta u} \gtrsim (p_n/n)^K e^{-2\beta u}.$ Thus 
 Condition \ref{cond1} is satisfied. 

{\it Condition 2:} 
\begin{equation*}
\int_0^{1} \pi(u) du \geq \frac{(\beta \wedge 1)e^{-bu} \tau }{ \Gamma(2)}  \int_0^1 u^{\tau -1} du = \frac{ (\beta \wedge 1) e^{-bu}}{ \Gamma(2)} \gtrsim 1.
\end{equation*}

{\it Condition 3:} Notice that $\pi(u) \leq (\beta \vee 1) \tau u^{\tau -1},$ for all $u\leq 1.$ For $u\geq 1,$ we find $\pi(u) \leq (\beta \vee 1) \tau e^{-\beta u}$. Since $e^{-\beta u}$ decays faster than any polynomial power of $u,$ we see that Condition \ref{cond3} holds thanks to $b_n \tau \lesssim s_n.$

Thus, we can apply Theorem \ref{thm:main}.

In \cite{Griffin2010}, it is discussed that the extra modelling flexibility afforded by generalizing the double exponential prior to include the parameter $\tau$ is essential, and indeed the double exponential ($\tau = 1$) does not allow a dependence on $p_n$ and $n$ such that our conditions are met.

\subsection{Spike-and-slab Lasso prior}

The spike-and-slab Lasso prior was introduced by \cite{Rockova2015}. It may be viewed as a continuous version of the usual spike-and-slab prior with a Laplace slab, as studied in \cite{Castillo2012, Castillo2015}, where the spike component has been replaced by a very concentrated Laplace distribution. Recent theoretical results, including posterior concentration at the minimax rate, have been obtained in \cite{Rockova2015}. Here, we recover Corollary 6.1 of \cite{Rockova2015}.

For a fixed constant $a>0$ and a sequence $\tau \rightarrow 0,$ we define the spike-and-slab Lasso  as prior of the form \eqref{eq:theprior} with hyperprior
\begin{align}
	\pi(u) = \omega a e^{-a u } + (1-\omega) \frac 1{\tau} e^{- \frac{u}{\tau}}, \quad u>0
	\label{eq.pi_spike_and_slab_LASSO}
\end{align}
on the variance. Recall that the Laplace distribution with parameter $\lambda$ is a scale mixture of normals where the mixing density is exponential with parameter $\lambda^2.$ Applied to model \eqref{eq:theprior}, the prior on $\theta_i$ is thus a mixture of two Laplace distributions with parameter $\sqrt{a}$ and $\tau^{-1/2}$ and mixing weights $\omega$ and $1-\omega,$ respectively and this justifies the name.

We now prove that the prior satisfies the conditions of Theorem \ref{thm:main} for mixing weights satisfying $(p_n/n)^K\leq \omega \leq (p_n/n)\sqrt{\log(n/p_n)}\leq \tfrac 12,$ for some $K>1$ and $\tau = (p_n/n)^{\alpha}$ with $\alpha \geq 1$.

{\it Condition 1:} To prove that Condition 1 holds we rewrite the prior $\pi$ as 
$$
\pi(u) = e^{-au} \left( a \omega + \frac{1-\omega}{\tau} e^{-u(\frac{1}{\tau} - a)} \right) =: e^{-au} L_n(u)
$$ 
For $n$ large enough, we have $1/\tau -a>1/(2\tau)$. For all $u> 1$ and for $C>0$ a constant depending only on $K$ and $\alpha,$ 
\begin{align*}
\frac{1-\omega}{\tau} e^{-u(\frac 1{\tau} -a)} \leq\frac 1{\tau} e^{-\frac{1}{2\tau}} \leq C\tau^{\frac K{\alpha}} \leq C\omega.
\end{align*}
Hence, for sufficiently large $n$, $a \omega \leq  L_n(u) \leq (a+C) \omega$ for all $u\geq 1.$ Thus $L_n$ is regular varying with $u_0=1$. Since also $\pi(u) \geq a \omega e^{-au}$ and $\omega \geq (p_n/n)^K$, Condition \ref{cond1} holds. 

{\it Condition 2:} $\int_0^1 \pi(u) du \geq (1-\omega) \int_0^\tau \frac 1{\tau} e^{-\frac u{\tau}} du = (1-\omega) (1 - e^{-1}) .$

{\it Condition 3:} We might split the two mixing components in \eqref{eq.pi_spike_and_slab_LASSO} and write $\pi=:\pi_1+\pi_2.$ To verify the condition for the first component $\pi_1,$ we use that $e^{-au}\leq 1$ for $u\leq 1$ and that $e^{-au}$ decays faster than any polynomial for $u>1.$ In order that Condition \ref{cond3} is satisfied, we need thus $\omega \lesssim (p_n/n)\sqrt{\log (n/p_n)}.$ For $\pi_2,$ there exists a constant $C$ such that $\pi_2(u)\leq C\tau/u^2$ for all $u\geq s_n,$ due to $s_n\geq \tau.$ Straightforward computations show that $\pi_2$ satisfies Condition \ref{cond3} since $\tau \leq p_n/n.$

Thus, we can apply Theorem \ref{thm:main}. \qed

\section{\label{sec:simulation}Simulation results}

To illustrate the point that our conditions are very sharp, we compute the average square loss for two priors that do not meet our conditions, and compare them with two of the examples from Section \ref{sec:examples}. 

The first prior that does not meet the conditions is of the form \eqref{eq:Ghoshform} of Section \ref{subsec:ex:ghosh} with $a = 0.1 \leq 1/2$, $L(u) = e^{-1/u}$ and density,
$$
\pi_1(u) \propto u^{-1.1} e^{-\tau_1^2/u}, 
$$ 
and we take $\tau_1 = p_n/n$. Note that $\pi_1$ does not meet our conditions, as explained in Section \ref{subsec:ex:ghosh}, and will be called a \emph{bad} prior.  
The second prior included in this simulation that does not fit our assumptions is the Laplace prior  (see Section \ref{sec:ex:norm:gamma}). The two priors considered in this simulation study that do meet the conditions are the horseshoe and the normal-gamma priors, both with $\tau = p_n/n$. 

For each of these priors, we sample from the posterior distribution using a Gibbs Sampling algorithm, following the one proposed for the horseshoe prior by \cite{Carvalho2010}. To do so, we first compute the full conditional distributions
\begin{align*}
p(\beta | X, \sigma^2) &= \frac{1}{\sqrt{2 \pi \hat{\sigma}^2}} e^{-\frac{1}{2\hat{\sigma}^2} (\beta - \hat{\beta})^2} \\
p(\sigma^2|X,\beta) &\propto (\sigma^2)^{-1/2} e^{-\frac{\beta^2}{2 \sigma^2}} \pi(\sigma^2),
\end{align*}
where $ \hat{\sigma}^2 = \sigma^2/(1 + \sigma^2)$ and $\hat{\beta} = X\sigma^2 / (1 + \sigma^2)$. The only difficulty is thus sampling from $p(\sigma^2|X,\beta)$. For the horseshoe prior we follow the approach proposed by \cite{Carvalho2010}. We apply a similar method for the normal-gamma prior using the approach proposed by \cite{Damien1999Gibbs}. Sampling from the \emph{bad} prior is even simpler given that in this case $p(\sigma|X, \beta)$ is an inverse gamma. 
We compute the average square loss on $500$ replicates of simulated data of size $n = 100,200,500,1000$. For each $n$, we fix the number of nonzero means at $p_n = 10$, and take the nonzero coefficients equal $5\sqrt{2\log{n}}$. This value is well past the 'universal threshold' of $\sqrt{2\log{n}}$, and thus the signals should  be relatively easy to detect. For each data set, we compute the posterior square loss using $5000$ draws from the posterior with a burn-in of $20\%$. 

\begin{figure}
\includegraphics[width = 0.8\textwidth]{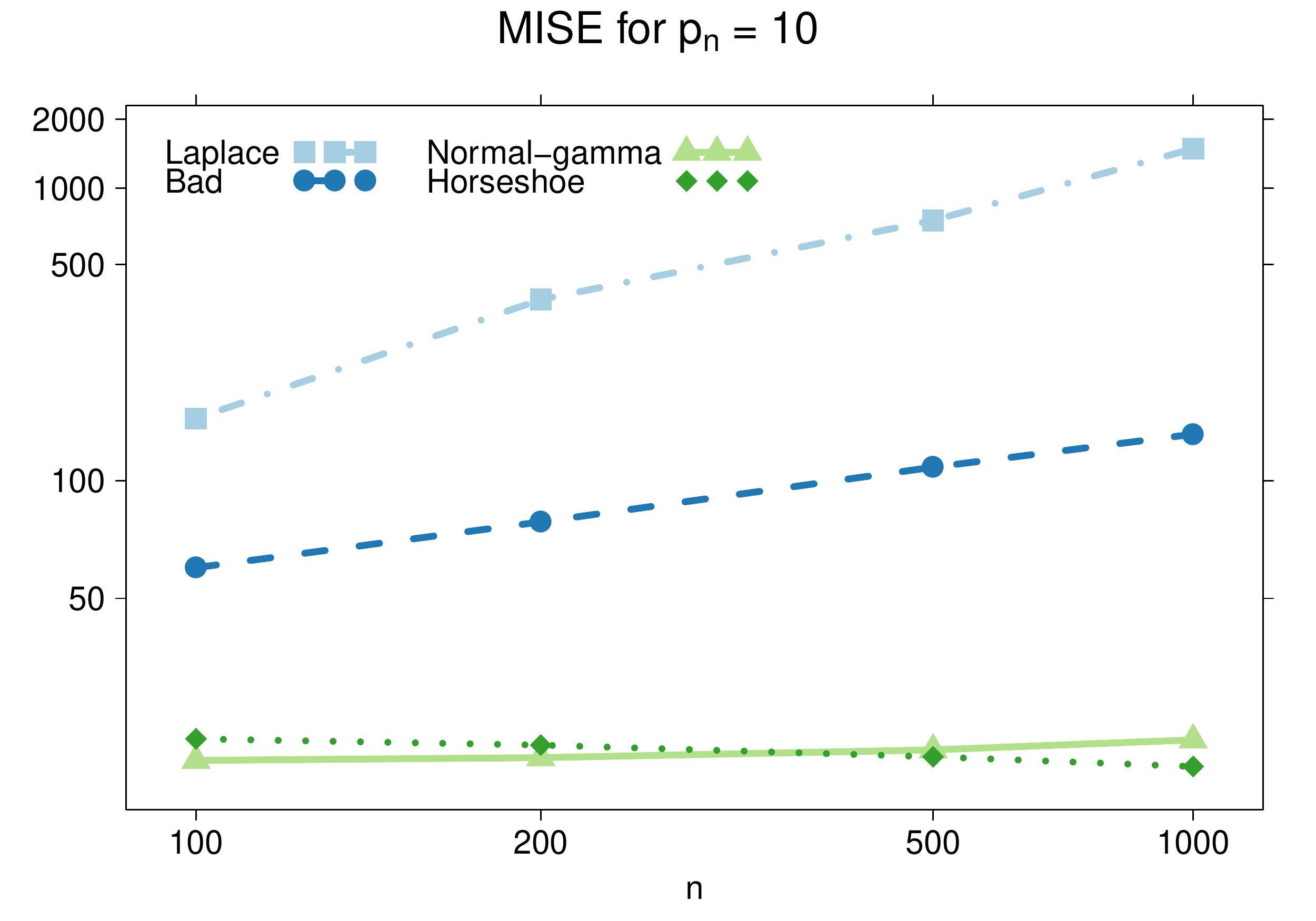}
\caption{The logarithm of the integrated square loss for the Laplace, \emph{bad}, Normal-Gamma and Horseshoe priors plotted against $\log\log{n}$, computed on 500 replicates of the data for each value of $n$. The axis labels refer to the original, non-log-transformed scale.}
\label{fig:Mise}
\end{figure} 

The results are presented in Figure \ref{fig:Mise}. Given that $p_n=10$ is fixed, if the posterior contracts at the minimax rate, then the integrated square loss should be linear in $\log{n}$. However, we see that for the Laplace and \emph{bad} priors, the slope of the loss grows with $n$, when it remains steady for the other two considered priors. This suggest that the horseshoe and normal-gamma have a risk of a lower order than the \emph{bad} and Laplace priors, illustrating that our conditions are very sharp.

\section{Discussion}
Our main theorem, Theorem \ref{thm:main}, expands the class of shrinkage priors with theoretical guarantees for the posterior contraction rate. Not only can it be used to obtain the optimal posterior contraction rate for the horseshoe+, the inverse-Gaussian and normal-gamma priors, but the conditions provide some characterization of properties of sparsity priors that lead to desirable behaviour. Essentially, the tails of the prior on the local variance should be at least as heavy as Laplace, but not too heavy, and there needs to be a sizable amount of mass around zero compared to the amount of mass in the tails, in particular when the underlying mean vector grows to be more sparse.

In \cite{Polson2010} global-local scale mixtures of normals like \eqref{eq:globallocal} are discussed, with a prior on the parameter $\tau^2$. Their guidelines are twofold: the prior on the local variance $\sigma_i^2$ should have heavy tails, while the prior on the global variance $\tau^2$ should have substantial mass around zero. They argue that any prior on $\sigma_i^2$ with an exponential tail will force a tradeoff between shrinking the noise towards zero and leaving the large nonzero means unshrunk, while the shrinkage of large signals will go to zero when a prior with a polynomial tail is chosen. This matches the intuition behind our conditions, with the remark that exponential tails \emph{are} possible, but they should not be lighter than Laplace.

Besides the three discussed goals of recovery, uncertainty quantification, and computational simplicity, we might have mentioned a fourth: performing \emph{model selection} or \emph{multiple testing}. Priors of the type studied in this paper are not directly applicable for this goal, as the posterior mean will, with probability one, not be exactly equal to zero. A model selection procedure can be constructed however, for example by thresholding using the observed values of $m_{x_i}$: if $m_{x_i}$ is larger than some constant, we consider the underlying parameter to be a signal, and otherwise we declare it noise. Such a procedure was proposed for the horseshoe by \cite{Carvalho2010}, and was shown to enjoy good theoretical properties by \cite{Datta2013}. Similar results were found for the horseshoe+ \cite{Bhadra2015}. The same thresholding procedure, and similar analysis methods, may prove to be fruitful for the more general prior \eqref{eq:theprior}.

\appendix

\section{\label{sec:proofs}Proofs}
This section contains the proofs of Theorem \ref{thm:main} and Theorem \ref{thm.necessary}, followed by the statement and proofs of the supporting Lemmas. The proof of Theorem \ref{thm:main} follows the same structure as that of Theorem 3.3 in \cite{vanderPas2014}, but requires more general methods to bound the integrals involved in the proof.   

In the course of the proofs, we use the following two transformations of $\pi,$
\begin{align}
g(z) &= \frac{1}{z^2}\pi\left(\frac{1-z}{z}\right) \quad \text{and} \quad
h(z) = \frac1{(1-z)^{3/2}} \pi\left(\frac{z}{1-z}\right). \label{eq:defh}
\end{align}
The function $g$ is a density on $[0,1]$, resulting from transforming the density $\pi$ on $\sigma_i^2$ to a density for $z = (1+\sigma_i^2)^{-1}$. The function $h$ is a rescaled version of $\pi$.\\

\begin{lem}\label{lem:cond1prime}
Condition \ref{cond1prime} implies Condition \ref{cond1}.
\end{lem}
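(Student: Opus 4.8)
The plan is to translate the global-local formulation of Condition~\ref{cond1prime} into the single-prior language of Condition~\ref{cond1} by computing the marginal density $\pi$ on the local variance and then checking the two requirements: that $\pi$ factors as $L_n(u)e^{-bu}$ with $L_n$ uniformly regular varying (with constants independent of $n$), and that the polynomial lower bound \eqref{eq.assump_on_lb_Ln} holds for large $u$. Writing $\sigma_i^2 \tau^2 = u$, so that the effective variance appearing in the prior \eqref{eq:theprior} is $u$, and recalling $\tau = (p_n/n)^\alpha$, a change of variables gives
\begin{equation*}
\pi(u) = \frac{1}{\tau^2}\,\widetilde\pi\!\left(\frac{u}{\tau^2}\right).
\end{equation*}
Since no exponential factor is present we simply take $b = 0$ and $L_n(u) = \tau^{-2}\,\widetilde\pi(u/\tau^2)$.

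First I would verify the uniform regular variation of $L_n$. Because $\widetilde\pi$ is uniformly regular varying, there are constants $R, u_0 \geq 1$ (not depending on $n$, since $\widetilde\pi$ does not depend on $n$) with $R^{-1} \leq \widetilde\pi(av)/\widetilde\pi(v) \leq R$ for all $a \in [1,2]$ and $v \geq u_0$. For $L_n$ the prefactor $\tau^{-2}$ cancels in the ratio:
\begin{equation*}
\frac{L_n(au)}{L_n(u)} = \frac{\widetilde\pi(au/\tau^2)}{\widetilde\pi(u/\tau^2)},
\end{equation*}
and for $u \geq u_0 \tau^2$ the argument $u/\tau^2 \geq u_0$ lies in the range where the two-sided bound for $\widetilde\pi$ applies, giving $R^{-1} \leq L_n(au)/L_n(u) \leq R$. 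Since $p_n = o(n)$ forces $\tau^2 \to 0$, eventually $u_0\tau^2 \leq 1$, so we may take the threshold in \eqref{eq.unif_reg_vary_at_zero_prop} to be $1$ (or any fixed constant $\geq 1$), with $R$ unchanged; crucially $R$ and this threshold do not depend on $n$, as required.

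Next I would check \eqref{eq.assump_on_lb_Ln}. By definition of uniform regular variation, $\widetilde\pi$ is bounded below on $[u_0, 2u_0]$ by $R^{-1}\widetilde\pi(u_0) > 0$, and iterating the lower bound $\widetilde\pi(2v) \geq R^{-1}\widetilde\pi(v)$ shows that $\widetilde\pi(v) \geq c\, v^{-\log_2 R}$ for all $v \geq u_0$, for some constant $c > 0$ depending only on $\widetilde\pi$ and $R$ — in particular, $\widetilde\pi$ decays at most polynomially. Hence for $u \geq u_0 \tau^2$,
\begin{equation*}
\pi(u) = \frac{1}{\tau^2}\widetilde\pi\!\left(\frac{u}{\tau^2}\right) \geq \frac{c}{\tau^2}\left(\frac{u}{\tau^2}\right)^{-\log_2 R} = c\,\tau^{2\log_2 R - 2}\, u^{-\log_2 R}.
\end{equation*}
For $u \geq u_* := \max\{1, u_0\tau^2\}$ (and eventually $u_*=1$), the factor $u^{-\log_2 R}$ is bounded below by $u^{-\log_2 R} \geq e^{-b'u}$ times a constant for any fixed $b' > 0$ — actually more simply one bounds $u^{-\log_2 R} \gtrsim e^{-b'u}$ directly — while $\tau^{2\log_2 R - 2} = (p_n/n)^{\alpha(2\log_2 R - 2)}$ is a fixed power of $p_n/n$. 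Absorbing constants into $C'$ and taking $K = \alpha(2\log_2 R - 2) \vee 0$ (if the exponent is negative the bound is only easier since $p_n/n \to 0$ — one then takes $K=0$, or rather picks $K$ large enough that $(p_n/n)^K$ is the smaller quantity) yields \eqref{eq.assump_on_lb_Ln}.

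The only mild subtlety — and the step I would be most careful about — is bookkeeping the $n$-dependence: one must confirm that the regular-variation constants $R, u_0$ transferred from $\widetilde\pi$ genuinely do not depend on $n$ (they do not, since $\widetilde\pi$ is fixed), and that the $\tau$-dependent prefactors collapse into a clean power of $p_n/n$ with an exponent expressible via $\alpha$ and $R$, so that $K$ can be chosen uniformly in $n$. Everything else is a routine change of variables and the elementary fact that a uniformly regular varying function is sandwiched between two polynomials on $[u_0,\infty)$. With these two points in hand, Conditions~\ref{cond1}'s hypotheses are met, completing the proof. \qed
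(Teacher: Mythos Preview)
Your proposal is correct and follows essentially the same route as the paper: compute $\pi(u)=\tau^{-2}\widetilde\pi(u/\tau^2)$, note that the ratio $L_n(au)/L_n(u)$ cancels the $\tau^{-2}$ factor so the regular-variation constants are inherited from $\widetilde\pi$, and then iterate the bound $\widetilde\pi(2v)\geq R^{-1}\widetilde\pi(v)$ to obtain the polynomial lower bound $\widetilde\pi(v)\gtrsim v^{-\log_2 R}$ needed for \eqref{eq.assump_on_lb_Ln}. The paper packages this last iteration as Lemma~\ref{lem.L(az)_bd} and chooses the threshold $u_*=u_0$ (which works for every $n$ since $\tau^2\leq 1$), whereas you argue toward $u_*=1$ via $u_0\tau^2\to 0$; using $u_*=u_0$ as in the paper is slightly cleaner because it avoids the ``eventually'' caveat, but the substance is identical.
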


\begin{proof}
Observe that $\pi(u) = \widetilde \pi(u/\tau^2)/\tau^2.$ Since by assumption $\widetilde \pi$ is uniformly regular varying, \eqref{eq.unif_reg_vary_at_zero_prop} holds for some constants $R$ and $u_0$ which do not depend on $n.$ To check the first part of Condition \ref{cond1}, it is enough to see that $\widetilde \pi(\cdot /\tau^2)$ is uniformly regular varying as well and satisfies \eqref{eq.unif_reg_vary_at_zero_prop} with the same constants as $\widetilde \pi.$

It remains to prove a lower bound \eqref{eq.assump_on_lb_Ln}. Thanks to $\tau^2 \leq 1$ and Lemma \ref{lem.L(az)_bd}, for any $u\geq u_*:=u_0,$ $\widetilde \pi(u/\tau^2) \geq \widetilde \pi(u_0) (\tau^2u_0/2u)^{\log_2 R}.$ This implies the lower bound \eqref{eq.assump_on_lb_Ln} with $K=2\alpha \log_2R,$ $b'>0,$ and $C'$ a sufficiently large constant. 
\end{proof}

\begin{proof}[Proof of Theorem \ref{thm:main}]
Applying Lemma \ref{lem.non_zero_components} gives under Condition \ref{cond1}, $\sum_{i: \theta_i \neq 0} \E_{\theta_i} (\theta_i - \widehat\theta_i)^2\lesssim p_n\log(n/p_n)$ and  $\sum_{i:\theta_i \neq 0} \E_{\theta_i} \Var( \theta_i \mid X_i) \lesssim p_n\log(n/p_n).$ These inequalities combined with Markov's inequality prove the first two statements of the theorem. Similarly, under Condition  \ref{cond2} and Condition \ref{cond3}, we obtain from Lemma  \ref{lem:bias.zero} and Lemma \ref{lem:var.zero}, $\E_\theta \sum_{i: \theta_i =0}\widehat \theta_i^2 \leq n\E_0(Xm_X)^2\lesssim p_n\log(n/p_n)$ and $\sum_{i:\theta_i = 0} \E_0 \Var( \theta_i \mid X_i)\lesssim p_n\log(n/p_n) .$ Together with Markov's inequality, this proves the third and fourth statement of the theorem.
\end{proof}

\begin{proof}[Proof of Theorem \ref{thm.necessary}]
Without loss of generality, we can take $\kappa_n$ such that $\kappa_n\geq n^{-1/4}$ for all $n.$ Consider the prior, where $\theta_i$ is drawn from the Laplace density with parameter $\lambda =\sqrt{s_n/\kappa_n}.$ This prior is of the form \eqref{eq:theprior} with $\pi(u)=\lambda^2 e^{-\lambda^2 u}$ (cf. Section \ref{sec.necessary}). Theorem 7 in \cite{Castillo2015} shows that \eqref{eq.thm_nec_concl} holds with $M_n =1/\kappa_n\rightarrow \infty.$ Thus it remains to prove that $\pi$ satisfies Condition \ref{cond2} and Condition \ref{cond3modified}.

Condition \ref{cond2} follows immediately. For Condition \ref{cond3modified} observe that due to $\kappa_n\geq n^{-1/4},$  $\lambda \geq n^{1/4}/\sqrt{\log n}.$ Splitting the integral $\int_0^{\lambda^2}=\int_0^1 +\int_1^{\lambda^2},$ we find $\kappa_n\int_{s_n}^1u\pi(u) du \leq \kappa_n\int_0^1 u\lambda^2 e^{-\lambda^2 u}du \leq \kappa_n\lambda^{-2}\int_0^{\lambda^2} ve^{-v}dv \lesssim \kappa_n\lambda^{-2}= s_n.$ Also, $\int_1^{b_n^2} u\pi(u) du =\lambda^{-2}\int_{\lambda^2}^{b_n^2\lambda^2} ve^{-v}dv\leq b_n^2 e^{-\lambda^2} =o(s_n)$ and $b_n^3 \int_1^\infty \pi(u)/\sqrt{u} du\leq b_n^3 \int_1^\infty \pi(u) du \leq b_n^{3}e^{-\lambda^2}=o(s_n).$ Hence, Condition \ref{cond3modified} holds and this completes the proof.
\end{proof}

\begin{lem}\label{lem:expr.var}
The posterior variance can be written as
\begin{equation}\label{eq:var2}
\Var(\theta \mid x) = m_x - (xm_x-x)^2 + x^2\frac{\int_0^1 (1-z)^2 h(z) e^{\frac{x^2}{2}z}dz}{\int_0^1  h(z) e^{\frac{x^2}{2}z}dz}
\end{equation}
and bounded by
\begin{align}
\Var(\theta \mid x) &\leq 1 
+ x^2\frac{\int_0^1 (1-z)^2 h(z) e^{\frac{x^2}{2}z}dz}{\int_0^1  h(z) e^{\frac{x^2}{2}z}dz}
\quad \text{and} \quad
\Var(\theta \mid x) \leq m_x + x^2m_x. \label{eq.var_ub}
\end{align}
\end{lem}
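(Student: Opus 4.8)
The plan is to compute the posterior variance of $\theta_i$ given $x_i$ explicitly by differentiating the log-marginal, and then bound the resulting expression.

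First I would set up the relevant representations. With the change of variables $z = \sigma^2/(1+\sigma^2)$ (equivalently $u = z/(1-z)$), recall from \eqref{eq:defmx} that the marginal density of $x$ is proportional to $p(x) \propto \int_0^1 (1-z)^{-3/2} e^{\frac{x^2}{2}z}\pi\big(\frac{z}{1-z}\big)\,dz = \int_0^1 h(z) e^{\frac{x^2}{2}z}\,dz$, using the definition of $h$ in \eqref{eq:defh}. Tweedie's formula gives $\E[\theta\mid x] = x + \frac{d}{dx}\log p(x) = x m_x$, and differentiating once more, $\Var(\theta\mid x) = 1 + \frac{d^2}{dx^2}\log p(x) = 1 + \frac{d}{dx}(xm_x - x) = 1 + \frac{d}{dx}(xm_x) - 1 = \frac{d}{dx}(xm_x)$. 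So the task reduces to computing $\frac{d}{dx}(xm_x)$. Writing $xm_x$ via the ratio form $x m_x = x\,\frac{\int_0^1 z\, e^{\frac{x^2}{2}z} h(z)\,dz}{\int_0^1 e^{\frac{x^2}{2}z} h(z)\,dz}$ (note $z\cdot(1-z)^{-3/2}\pi(\frac{z}{1-z}) = z\, h(z)$, matching the numerator of $m_x$), I would differentiate the product, using $\frac{d}{dx}e^{\frac{x^2}{2}z} = xz\, e^{\frac{x^2}{2}z}$. This produces three terms: one from differentiating the leading $x$, giving $m_x$; and two from the quotient rule, which organize into $x^2$ times $\big(\E_z[z^2] - (\E_z[z])^2\big)$ where $\E_z$ denotes expectation against the tilted density $\propto e^{\frac{x^2}{2}z}h(z)\,dz$. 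Using $1 - 2z + z^2 = (1-z)^2$ together with $m_x = \E_z[z]$, the combination $m_x + x^2(\E_z[z^2] - \E_z[z]^2)$ can be rearranged — after adding and subtracting — into the claimed form $m_x - (xm_x - x)^2 + x^2\frac{\int_0^1(1-z)^2 h(z)e^{\frac{x^2}{2}z}\,dz}{\int_0^1 h(z)e^{\frac{x^2}{2}z}\,dz}$. This is really just the identity $\E_z[z^2] = \E_z[(1-z)^2] - 1 + 2\E_z[z]$ combined with $(xm_x - x)^2 = x^2(1 - \E_z[z])^2$; I would carry out this bookkeeping carefully to land exactly on \eqref{eq:var2}.

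For the bounds in \eqref{eq.var_ub}: the first follows from \eqref{eq:var2} by simply dropping the non-positive term $-(xm_x - x)^2$ and bounding $m_x \leq 1$ (which holds since $m_x$ is an average of values $z \in [0,1]$). For the second, I would go back to the pre-rearrangement form $\Var(\theta\mid x) = m_x + x^2(\E_z[z^2] - \E_z[z]^2)$ and note $\E_z[z^2] - \E_z[z]^2 \leq \E_z[z^2] \leq \E_z[z] = m_x$ (again using $z \in [0,1]$ so $z^2 \leq z$), yielding $\Var(\theta\mid x) \leq m_x + x^2 m_x$.

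I do not expect a serious obstacle here; the only delicate point is the algebraic reorganization in the second paragraph — making sure the $-(xm_x-x)^2$ term and the $(1-z)^2$ integral appear with exactly the right signs and constants — so I would be careful to track each term when expanding the quotient-rule derivative. Everything else is elementary, relying only on $0 \le z \le 1$ on the support.
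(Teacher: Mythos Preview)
Your proposal is correct and follows essentially the same approach as the paper: both apply Tweedie's formula $\Var(\theta\mid x)=1+\tfrac{d^2}{dx^2}\log p(x)$, differentiate the integral representation, and then recognize the resulting ratios as $m_x$ and $(xm_x-x)^2$. The only cosmetic differences are that the paper differentiates the kernel $e^{-\frac{x^2}{2}(1-z)}$ directly (so the terms appear in the $(1-z)$ form from the start), and for the second bound the paper uses $(1-z)^2\le 1-z$ inside \eqref{eq:var2} whereas you use $z^2\le z$ in the pre-rearranged expression $m_x+x^2\operatorname{Var}_z(z)$---your route is in fact a touch more direct. One small slip: your ``$p(x)\propto \int_0^1 h(z)e^{x^2z/2}\,dz$'' drops the $e^{-x^2/2}$ factor (the proportionality constant must not depend on $x$); this is harmless since you then correctly use $\E[\theta\mid x]=xm_x$ and $\Var(\theta\mid x)=\tfrac{d}{dx}(xm_x)$, but be sure to carry that factor when you write out the full derivation.
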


\begin{proof}
By Tweedie's formula \cite{Robbins1956}, the posterior variance for  $\theta_i$ given an observation $x_i$ is equal to $1 + (d^2/dx^2) \log p(x)|_{x = x_i}$, where $p(x_i)$ is the marginal distribution of $x_i$. Computing
\begin{align*}
p(x)  &= \int_0^1 \frac{1}{\sqrt{2\pi}} (1-z)^{-3/2}e^{-\frac{x^2}{2}(1-z)} \pi\left(\frac{z}{1-z}\right) dz,
\end{align*}
taking derivatives with respect to $x$, and substituting $h(z) = (1-z)^{-3/2}\pi(z/(1-z))$ gives 
\begin{equation*}
 \Var(\theta \mid x)= 1 
+ x^2\frac{\int_0^1 (1-z)^2 h(z) e^{\frac{x^2}{2}z}dz}{\int_0^1  h(z) e^{\frac{x^2}{2}z}dz} 
- \frac{\int_0^1 (1-z) h(z) e^{\frac{x^2}{2}z}dz}{\int_0^1 h(z) e^{\frac{x^2}{2}z}dz}
- x^2\left(\frac{\int_0^1 (1-z) h(z) e^{\frac{x^2}{2}z}dz}{\int_0^1 h(z) e^{\frac{x^2}{2}z}dz} \right)^2.
\end{equation*}
From that we can derive \eqref{eq:var2} noting that the third term on the r.h.s. is $1 - m_x.$ The last display also implies the first inequality in \eqref{eq.var_ub}. Representation \eqref{eq:var2} together with the trivial bound $(1-z)^2 \leq (1-z)$ for $z \in [0,1]$ yields
\begin{equation*}
x^2\frac{\int_0^1 (1-z)^2 h(z) e^{\frac{x^2}{2}z}dz}{\int_0^1  h(z) e^{\frac{x^2}{2}z}dz}
\leq x^2\frac{\int_0^1 (1-z) h(z) e^{\frac{x^2}{2}z}dz}{\int_0^1  h(z) e^{\frac{x^2}{2}z}dz} = x^2(1-m_x).
\end{equation*}
Combined with \eqref{eq:var2}, we find $\Var(\theta \mid x) \leq m_x - x^2m_x^2 + x^2m_x \leq m_x + x^2m_x.$
\end{proof}

\begin{lem}
\label{lem.L(az)_bd}
Suppose that $L$ is uniformly regular varying. If $R$ and $u_0$ are chosen such that \eqref{eq.unif_reg_vary_at_zero_prop} holds, then, for any $a\geq 1,$
\begin{align*}
	L(u) \leq (2a)^{\log_2 R} L(au),
\end{align*}
where $\log_2$ denotes the binary logarithm.
\end{lem}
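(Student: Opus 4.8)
## Proof proposal for Lemma \ref{lem.L(az)_bd}

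The plan is to prove the bound by a dyadic argument, bootstrapping the single-step estimate \eqref{eq.unif_reg_vary_at_zero_prop} (which controls $L(au)/L(u)$ only for $a\in[1,2]$) up to arbitrary scaling factors $a\geq 1$. The key observation is that any $a\geq 1$ can be written as $a = 2^k t$ with $k\in\mathbb{Z}_{\geq 0}$ and $t\in[1,2)$, so that passing from $u$ to $au$ can be decomposed into $k$ doublings followed by one final multiplication by $t$, at each stage invoking \eqref{eq.unif_reg_vary_at_zero_prop}.

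First I would fix $a\geq 1$ and let $k = \lfloor \log_2 a\rfloor$, so that $2^k \leq a < 2^{k+1}$ and $a = 2^k t$ with $t := a/2^k \in [1,2)$. Then I would write the telescoping identity
\begin{equation*}
\frac{L(u)}{L(au)} = \frac{L(u)}{L(2u)}\cdot\frac{L(2u)}{L(4u)}\cdots\frac{L(2^{k-1}u)}{L(2^k u)}\cdot\frac{L(2^k u)}{L(2^k t u)}.
\end{equation*}
Each of the first $k$ factors is of the form $L(v)/L(2v)$ with $v = 2^j u \geq u \geq u_0$, hence bounded above by $R$ by \eqref{eq.unif_reg_vary_at_zero_prop}; the last factor is $L(v)/L(tv)$ with $v = 2^k u \geq u_0$ and $t\in[1,2]$, again bounded above by $R$. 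Multiplying, $L(u)/L(au) \leq R^{k+1}$. Since $k = \lfloor \log_2 a\rfloor \leq \log_2 a$, we get $R^{k+1} = R^{k}\cdot R \leq R^{\log_2 a}\cdot R = a^{\log_2 R}\cdot R$. To match the stated form $(2a)^{\log_2 R} = a^{\log_2 R}2^{\log_2 R} = a^{\log_2 R} R$, this is exactly the desired inequality, so $L(u) \leq (2a)^{\log_2 R} L(au)$ follows. (Here I am using $R^{\log_2 a} = a^{\log_2 R}$, valid since $R, a \geq 1$; if $R = 1$ the bound is trivial.)

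I do not anticipate a genuine obstacle here — the argument is a routine dyadic bootstrap — but the one point requiring a little care is ensuring that at every stage the argument of $L$ stays in the region $[u_0,\infty)$ where \eqref{eq.unif_reg_vary_at_zero_prop} is valid; since $u \geq u_0$ and all intermediate arguments $2^j u$ and $2^k t u$ are $\geq u$, this holds automatically, and the positivity/sign remarks following \eqref{eq.unif_reg_vary_at_zero_prop} guarantee the ratios are well-defined and positive throughout. One should also note the bound is only claimed for $u \geq u_0$ implicitly (that is the regime in which the conclusion is used in Lemma \ref{lem:cond1prime}), so no statement is needed for small $u$.
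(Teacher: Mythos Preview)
Your proposal is correct and follows essentially the same approach as the paper: both write $a=2^k t$ (the paper uses $2^r b$) with $t\in[1,2)$, telescope through $k$ doublings plus one final factor to obtain $L(u)\leq R^{k+1}L(au)$, and then bound $R^{k+1}\leq (2a)^{\log_2 R}$. Your added remarks on why the intermediate arguments stay in $[u_0,\infty)$ and on the implicit restriction $u\geq u_0$ are accurate and make explicit what the paper leaves tacit.
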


\begin{proof}
Write $a=2^rb$ with $r$ a non-negative integer and $1\leq b<2.$ By assumption \eqref{eq.unif_reg_vary_at_zero_prop} holds for some $R$ and $u_0.$ We apply the upper bound \eqref{eq.unif_reg_vary_at_zero_prop} repeatedly and obtain for $a\geq 1,$  $L(u)\leq R L(2u) \leq \ldots \leq R^r L(2^r u) \leq R^{r+1}L(au).$ Since $R^{r+1}=(2^{r+1})^{\log_2 R} \leq (2a)^{\log_2 R},$ the result follows.
\end{proof}

\begin{lem}
\label{lem.Ln_shift}
Assume that $L$ is uniformly regular varying and satisfies \eqref{eq.unif_reg_vary_at_zero_prop} with $R$ and $u_0.$ Then,  the shifted function $L(\cdot -1)$ is also uniformly regular varying with constants $R^3$ and $u_0\vee 2.$
\end{lem}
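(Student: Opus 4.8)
The goal is to show that if $L$ is uniformly regular varying with constants $R$ and $u_0$, then the shifted function $u \mapsto L(u-1)$ satisfies \eqref{eq.unif_reg_vary_at_zero_prop} with constants $R^3$ and $u_0 \vee 2$. The plan is to control the ratio $L(au-1)/L(u-1)$ for $a \in [1,2]$ and $u \geq u_0 \vee 2$ by comparing both $L(au-1)$ and $L(u-1)$ to the un-shifted values $L(au)$ and $L(u)$, and then using the original bound \eqref{eq.unif_reg_vary_at_zero_prop} on the middle ratio $L(au)/L(u)$.

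First I would fix $a \in [1,2]$ and $u \geq u_0 \vee 2$, and note that since $u \geq 2$ we have $u - 1 \geq 1$, so $u - 1 \geq u/2$, i.e. $u \leq 2(u-1)$; similarly $u - 1 \leq u$. Thus $u - 1 = cu$ for some $c \in [1/2, 1]$, and likewise $au - 1 = c'(au)$ for some $c' \in [1/2,1]$ (here I use $au \geq u \geq 2$). The key observation is that passing between an argument $v \geq u_0$ and $v' = \lambda v$ with $\lambda \in [1/2, 1]$ costs at most a factor $R$: indeed writing $v = (1/\lambda) v'$ with $1/\lambda \in [1,2]$ and $v' \geq v/2 \geq \ldots$ — one must be slightly careful that $v' \geq u_0$, which holds because $v' = u-1 \geq u_0 \vee 2 - 1 \geq \ldots$; actually the cleanest route is to require $u \geq u_0 \vee 2$ but additionally note $u - 1 \geq u_0$ fails in general, so instead I would apply \eqref{eq.unif_reg_vary_at_zero_prop} in the form: for $w \geq u_0$ and $t \in [1,2]$, $R^{-1} \leq L(tw)/L(w) \leq R$, and choose $w = u-1$ (which is $\geq u_0 \vee 2 - 1$; this forces me to instead take the constant to be $u_0 \vee 2$ and argue $u - 1 \geq u_0$ whenever $u \geq u_0 + 1$, hmm).

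The cleaner bookkeeping: write $L(au-1) = L\big(\tfrac{au-1}{u}\cdot u\big)$. Since $1 \leq au \leq 2u$ and $au - 1 \geq u - 1 \geq u/2$ (using $u \geq 2$), the factor $\tfrac{au-1}{u}$ lies in $[1/2, 2]$. Decompose it as a product of at most two factors each in $[1,2]$ or $[1/2,1]$; each application of \eqref{eq.unif_reg_vary_at_zero_prop} (or its reciprocal form) costs a factor $R$, and $u \geq u_0$. Doing this for the numerator $L(au-1)$ relative to $L(u)$ costs $R^2$, and for the denominator $L(u-1)$ relative to $L(u)$ costs $R$ — total $R^3$. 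Combining,
\[
\frac{1}{R^3} \leq \frac{L(au-1)}{L(u-1)} = \frac{L(au-1)}{L(u)}\cdot\frac{L(u)}{L(u-1)} \leq R^3,
\]
valid for all $a \in [1,2]$ and $u \geq u_0 \vee 2$, which is exactly \eqref{eq.unif_reg_vary_at_zero_prop} for $L(\cdot - 1)$ with constants $R^3$ and $u_0 \vee 2$.

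The main obstacle is purely bookkeeping: making sure every intermediate argument to which \eqref{eq.unif_reg_vary_at_zero_prop} is applied is $\geq u_0$ and every ratio used is in $[1,2]$ (not $[1/2,1]$, which requires passing to the reciprocal $1/L$, also uniformly regular varying with the same constants as noted after the definition). Choosing the threshold $u_0 \vee 2$ (so that $u - 1 \geq 1$ and all rescalings stay in $[1/2,2]$) handles this; one may absorb the remaining slack into the exponent $3$ on $R$, which is why $R^3$ rather than $R^2$ appears. Lemma \ref{lem.L(az)_bd} could alternatively be invoked to streamline the three applications into a single inequality, but the direct argument above suffices.
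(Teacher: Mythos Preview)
Your approach is essentially the same as the paper's. The paper uses the clean three-term telescoping
\[
\frac{L(au-1)}{L(u-1)} = \frac{L(au-1)}{L(au)} \cdot \frac{L(au)}{L(u)} \cdot \frac{L(u)}{L(u-1)}
\]
and applies \eqref{eq.unif_reg_vary_at_zero_prop} once to each factor (for $u\geq u_0\vee 2$ the multipliers $\tfrac{au}{au-1}$, $a$, and $\tfrac{u}{u-1}$ all lie in $[1,2]$), giving the $R^3$ bound directly; your two-term split anchored at $L(u)$ achieves the same thing with slightly heavier bookkeeping. The threshold concern you flag---whether the base points $u-1$ and $au-1$ are $\geq u_0$---is glossed over in the paper too; it is harmless in practice since the only application (Lemma~\ref{lem.non_zero_components}) assumes $u_0\geq 2$ without loss of generality and works far above the threshold.
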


\begin{proof}
Write
\begin{align*}
	\frac{L(az-1)}{L(z-1)} = \frac{L(az-1)}{L(az)} \cdot \frac{L(az)}{L(z)} \cdot \frac{L(z)}{L(z-1)}.
\end{align*}
For $z\geq z_0\vee 2$ we apply \eqref{eq.unif_reg_vary_at_zero_prop} to each of the three fractions and this completes the proof.
\end{proof}

The following lemma states that if the density $g$ can be decomposed as a product of a function that is uniformly varying and possibly $n$ dependent, and a factor of the form $z\mapsto e^{-bz},$ then the posterior recovers the size of the non-zero components of $\theta$ with the minimax estimation rate, provided that the $n$ dependence is of the right order. \\

\begin{lem}
\label{lem.non_zero_components}
If Condition \ref{cond1} holds, there exists a constant $C,$ which is independent of $n$, such that 
\begin{align}\label{eq:non.zero.bias}
	\sum_{i: \theta_i \neq 0} \E_{\theta_i} (X_im_{X_i} -\theta_i)^2 \leq C p_n \log (en/p_n),
\end{align}
and
\begin{align}\label{eq:non.zero.var}
	\sum_{i: \theta_i \neq 0} \Var (\theta_i| X_i) \leq C p_n \log (en/p_n).
\end{align}
\end{lem}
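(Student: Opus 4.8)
The plan is to prove the two displays separately, but both rest on the same device: since $m_x \in [0,1]$ always, the quantity $(x m_x - x)^2$ is at most $x^2$, and the posterior variance obeys the bound $\Var(\theta\mid x)\le 1 + x^2 \tfrac{\int_0^1 (1-z)^2 h(z) e^{x^2 z/2}\,dz}{\int_0^1 h(z) e^{x^2 z/2}\,dz}$ from Lemma \ref{lem:expr.var}. So for both \eqref{eq:non.zero.bias} and \eqref{eq:non.zero.var} it suffices to control, uniformly over $\theta_i\neq 0$, the expectation over $X_i\sim\mathcal N(\theta_i,1)$ of a ratio of the form $x^2 \tfrac{\int_0^1 (1-z)^{k} h(z) e^{x^2 z/2}\,dz}{\int_0^1 h(z) e^{x^2 z/2}\,dz}$ for $k\in\{1,2\}$ (note $(xm_x-x)^2 = x^2(1-m_x)^2 \le x^2(1-m_x) = x^2\tfrac{\int_0^1(1-z)h(z)e^{x^2z/2}dz}{\int_0^1 h(z)e^{x^2z/2}dz}$), and show it is $\lesssim 1 + (x-\theta_i)^2 + \log(en/p_n)$, or better, something that integrates against the Gaussian to $O(\log(en/p_n))$. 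Summing over the at most $p_n$ nonzero coordinates then gives the claimed bound.

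First I would reduce everything to a pointwise estimate on $m_x$ and on the variance-ratio. The key structural input is Condition \ref{cond1}: write $\pi(u) = L_n(u) e^{-bu}$ with $L_n$ uniformly regular varying with constants $R,u_0$ independent of $n$, and the lower bound $C'\pi(u)\ge (p_n/n)^K e^{-b'u}$ for $u\ge u_*$. In the $z$-variable, $h(z) = (1-z)^{-3/2}\pi(z/(1-z))$, and the exponential factor $e^{-bu}$ becomes $e^{-bz/(1-z)}$; the point is that near $z=1$ (i.e.\ $u$ large, which is the relevant regime for large $x$) this is a mild, slowly-varying-type perturbation, and $L_n(z/(1-z))$ is controlled by Lemma \ref{lem.L(az)_bd} and Lemma \ref{lem.Ln_shift}. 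The strategy for the numerator/denominator ratios is the usual Laplace-type argument: the integrand $h(z)e^{x^2 z/2}$ concentrates near $z=1$ when $x^2$ is large, the factor $(1-z)^k$ there provides a gain of order $x^{-2k}$, so $x^2\cdot(\text{ratio})$ is $O(1)$ plus lower-order terms; for moderate $x$ one bounds crudely. The lower bound in Condition \ref{cond1} is exactly what keeps the denominator from being too small — it guarantees $\int_0^1 h(z)e^{x^2z/2}dz \gtrsim (p_n/n)^K \cdot(\text{something like } e^{x^2/2}/x^{?})$, so that the logarithm of the reciprocal contributes only a term of order $\log(n/p_n)$, which is where $\log(en/p_n)$ in the bound comes from.

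Then I would carry out the integration over $X_i\sim\mathcal N(\theta_i,1)$: split according to whether $|X_i|$ is small or large (say, threshold at a multiple of $\sqrt{\log(en/p_n)}$ or simply use that $\E_{\theta_i}X_i^2 = 1+\theta_i^2$ is not itself bounded, so one cannot be too lazy). On the large-$x$ event the pointwise bound gives an integrand of order $1 + \log(en/p_n)$; on the complementary event, $m_x$ can be bounded using Condition \ref{cond1} by something like $(n/p_n)^K$ times a Gaussian tail, and the Gaussian tail beats the polynomial, contributing $O(1)$ after integration. Here one must be slightly careful that $\theta_i$ is arbitrary (possibly huge): but $m_x\le 1$ always, so $(X_i m_{X_i}-\theta_i)^2 \le 2(X_i m_{X_i} - X_i)^2 + 2(X_i-\theta_i)^2 \le 2X_i^2(1-m_{X_i})^2 + 2(X_i-\theta_i)^2$, and the second term has expectation $2$; for the first term, $(1-m_x)$ is small when $x$ is large (by the Laplace argument) and $x^2$ is moderate when $x$ is small, so the product is controlled in all regimes. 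Assembling, each nonzero coordinate contributes $O(\log(en/p_n))$, and there are at most $p_n$ of them.

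The main obstacle I anticipate is making the Laplace-type estimate on the ratio $\int_0^1(1-z)^k h(z)e^{x^2z/2}dz \big/ \int_0^1 h(z)e^{x^2z/2}dz$ genuinely uniform in $n$, given that $L_n$ (hence $h$) depends on $n$ and the exponential factor $e^{-bz/(1-z)}$ degenerates at $z=1$. One cannot simply invoke a fixed Karamata/regular-variation theorem; instead one must use the \emph{uniform} regular-variation bounds (Lemmas \ref{lem.L(az)_bd}, \ref{lem.Ln_shift}) to sandwich $L_n(z/(1-z))$ between powers of $(1-z)$ with $n$-independent exponents on the relevant range, and then handle the $e^{-bz/(1-z)}$ factor by noting that on the window where the Gaussian factor $e^{x^2z/2}$ concentrates (a window of width $\sim x^{-2}$ around $z=1$ translating to $u\sim x^2$) this factor is $e^{-\Theta(x^2)}$, which combines cleanly with the rest. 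Getting the constants to come out independent of $n$ — and in particular making sure the $(p_n/n)^K$ from the lower bound only ever enters inside a logarithm — is the delicate bookkeeping that constitutes the real work; this is presumably why the authors phrase it as "requires more general methods to bound the integrals involved" than in \cite{vanderPas2014}, where $\pi$ was the explicit horseshoe density.
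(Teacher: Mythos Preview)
Your approach matches the paper's in spirit, but two of your heuristics would lead you astray if followed literally.

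First, the paper obtains the clean \emph{pointwise} bound $\sup_x |x(1-m_x)|\lesssim 1+\sqrt{\log(n/p_n)}$ (and analogously $\sup_x\Var(\theta\mid x)\lesssim 1+\log(n/p_n)$), after which the integration over $X_i\sim\mathcal N(\theta_i,1)$ is immediate via your decomposition $(X_im_{X_i}-\theta_i)^2\le 2X_i^2(1-m_{X_i})^2+2(X_i-\theta_i)^2$. No split on the size of $|X_i|$ is needed; in particular your remark that on the small-$|X_i|$ event ``$m_x$ can be bounded using Condition \ref{cond1} by something like $(n/p_n)^K$ times a Gaussian tail'' is confused --- Condition \ref{cond1} supplies a \emph{lower} bound on $\pi$, not an upper bound on $m_x$, and the small-$|x|$ regime is handled trivially by $|x(1-m_x)|\le|x|\le T_0$ with $T_0\asymp\sqrt{\log(n/p_n)}$.

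Second, and more seriously, your Laplace scaling is wrong when $b>0$. You locate the mass of $h(z)e^{x^2z/2}$ in a window of width $\sim x^{-2}$ near $z=1$, i.e.\ $\sigma^2\sim x^2$, and claim the factor $e^{-b\sigma^2}$ is $e^{-\Theta(x^2)}$. But the full exponent $\tfrac{x^2}{2}z-\tfrac{bz}{1-z}$ is maximised at $1-z\asymp 1/|x|$, i.e.\ $\sigma^2\asymp|x|$, giving $e^{-\Theta(|x|)}$; localising instead to $\sigma^2\sim x^2$ would produce a lower bound on the denominator that is too small by a factor $e^{-\Theta(x^2)}$, and the ratio estimate would collapse. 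The paper works in the variable $u=(1+\sigma^2)^{-1}$ (so the Gaussian factor becomes $e^{-x^2u/2}$), bounds the denominator from below by restricting to $u\in[1/|x|,2/|x|]$, and splits the numerator into three pieces: a piece near $u=0$ bounded trivially by its length, a middle range on which a dyadic sum combined with Lemma \ref{lem.L(az)_bd} controls $L_n$, and a tail piece where the lower bound \eqref{eq.assump_on_lb_Ln} is invoked exactly as you anticipated, with the factor $(p_n/n)^K$ absorbed by taking the threshold $T_0$ of order $\sqrt{K\log(n/p_n)}$.
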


\begin{proof}
We prove the two statements separately. The main argument is a careful analysis of the integral representation
\begin{align*}
	|x(m_x-1)| 
	= |x| \frac{\int_0^1  e^{ -\frac{x^2}{2}z} z^{-1/2} \pi\big(\tfrac 1z -1\big) dz}{\int_0^1  e^{-\frac{x^2}{2}z}z^{-3/2} \pi\big(\tfrac 1z -1\big) dz } 
	= |x| \frac{\int_0^1  e^{ -\frac{x^2}{2}u} u^{3/2} g(u) du}{\int_0^1  e^{-\frac{x^2}{2}u}u^{1/2}g(u) du }
\end{align*}
(cf. \eqref{eq:defmx} and \eqref{eq:defh}). Throughout the remaining proof, let $C_1$ be a generic constant which is independent of $n$ and which might change from line to line. Without loss of generality, we may assume that $u_0\geq 2.$

{\it Proof of \eqref{eq:non.zero.bias}:} It is enough to show $\sup_{x>0} |x(m_x-1)| \lesssim 1+\sqrt{\log (n/p_n)}.$ It is thus enough to consider the $\sup$ over $|x|>T_0:=2+2(u_0\vee u_*)+\sqrt{8u_0^{-1}K\log (n/p_n)},$ since otherwise, we simply use $|x(m_x-1)|\leq |x| .$

For $0\leq a<b\leq 1,$ write $I(a,b) = \int_a^b  e^{ -\frac{x^2}{2}u} u^{3/2} g(u) du / \int_0^1  e^{-\frac{x^2}{2}u}u^{1/2}g(u) du$ and for $b\leq a,$ set $I(a,b)=0.$ We need to prove that 
\begin{equation*}
I(0,1) = I\big(0,\tfrac{2b+4}{|x|}\big)+I\big(\tfrac{2b+4}{|x|}, u_0\big)+I\big(u_0,1\big) =: (I)+(II)+(III) \lesssim \frac 1{|x|}.
\end{equation*}
{\it Bound for $(I):$} Obviously, $I(0,v) \leq v$ for all $v\in (0,1].$ Thus, $I\big(0,\tfrac{2b+4}{|x|}\big)\leq C_1/|x|.$

{\it Bound for $(II):$} We first derive a lower bound for the denominator. Recall that by Condition \ref{cond1}, $\pi(u) = L_n(u) e^{-bu}.$ Define $\widetilde L_n = L_n(\cdot -1)$ and observe that due to $|x| \geq 2u_0$ we can use Lemma \ref{lem.Ln_shift} and substitute $v= u|x|/2$ to obtain
\begin{align}
	\int_0^1  e^{-\frac{x^2}{2}u}u^{-3/2} \pi\big(\tfrac 1u -1\big) du 
	&\geq 
	 \int_{1/|x|}^{2/|x|} e^{-\frac{x^2}{2}u}u^{-3/2}\widetilde L_n \big(\tfrac 1u\big)e^{-\frac bu+b} du  \\
	&\geq e^{b-(1+b)|x|} |x|^{3/2}  \int_{1/|x|}^{2/|x|} \widetilde L_n \big(\tfrac 1u\big) du \notag \\
	&= e^{b-(1+b)|x|} |x|^{1/2} 2 \int_{1/2}^1 \widetilde L_n \big(\tfrac 1v \cdot \tfrac {|x|} 2\big) dv \notag \\
	&\geq \frac 1{R^3}  e^{b-(1+b)|x|} |x|^{1/2}  \widetilde L_n\big(\tfrac {|x|} 2\big).
	\label{eq.denominator_non_zero_bd}
\end{align}
For the upper bound, using Lemma \ref{lem.L(az)_bd} with $u=|x|/v$ and $a=v/2,$
\begin{align*}
	&\int_{(2b+4)/|x|}^{u_0^{-1}}   e^{ -\frac{x^2}{2}u} u^{-1/2} \pi\big(\tfrac 1u -1\big) du\\
	&= \sum_{k=1}^\infty \int_{(2b+4+k-1)/|x|}^{(2b+4+k)/|x|}  e^{-\frac{x^2}{2}u}u^{-1/2}\widetilde L_n\big(\tfrac 1u\big)e^{b-\frac bu} \mathbf{1}(u\leq u_0^{-1}) du \\
	&\leq e^b\sum_{k=1}^\infty e^{- \frac{|x|}2 (2b+4+k-1)} 
	\Big(\frac{|x|}{2b+4+k-1}\Big)^{1/2}
	\int_{(2b+4+k-1)/|x|}^{(2b+4+k)/|x|}  \widetilde L_n\big(\tfrac 1u\big) \mathbf{1}(u\leq u_0^{-1}) du\\
		&\leq e^b\sum_{k=1}^\infty e^{- \frac{|x|}2 (2b+2+k)} |x|^{-1/2}
	\int_{2b+4+k-1}^{2b+4+k}  \widetilde L_n\big(\tfrac {|x|}v\big)\mathbf{1}\big(v\leq \tfrac {|x|}{u_0}\big) dv\\
		&\leq e^{-|x|(b+1)} |x|^{-1/2}   \widetilde L_n\big(\tfrac {|x|}2\big)  e^b\sum_{k=1}^\infty e^{- \frac{|x|}2 k} 
	(2b+4+k)^{3\log_2 R}.
\end{align*}
Notice that the sum $\sum_{k=1}^\infty e^{- \frac{|x|}2 k} (2b+4+k)^{3\log_2 R}$ is bounded for $|x|>T_0.$ Since by assumption, $R$ does not depend on $n$, we find $I\big(\tfrac{2b+4}{|x|}, u_0\big)\leq C_1 /|x|.$

{\it Bound for $(III):$} In this case, we use that $g$ is a density and find $\int_{u_0}^1  e^{-\frac{x^2}{2}u}u^{3/2}g(u) du \leq e^{-u_0 x^2/2}.$ For the denominator, we find using \eqref{eq.denominator_non_zero_bd}, $|x|\geq 2+2u_*,$ and assumption \eqref{eq.assump_on_lb_Ln} 
\begin{align*}
	\int_0^1  e^{-\frac{x^2}{2}u}u^{-3/2} \pi\big(\tfrac 1u -1\big) du \geq \frac{1}{R^3} e^{-(1+\frac b 2 )|x|} |x|^{1/2} \pi \big(\tfrac{|x|}2 -1\big) \geq \frac{1}{R^3C'} \big(\tfrac {p_n}n \big)^K e^{-(1+b+b' )|x|} |x|^{1/2} .
\end{align*}
Combining this with the upper bound  and $(1+b+b')|x| \leq (1+b+b')^2/u_0 +u_0 x^2/4$ gives
\begin{align*}
	I\big(u_0,1\big) \leq C' R^3 \big(\tfrac n{p_n}\big)^K |x|^{-1/2} e^{(1+b+b')^2/u_0}e^{-u_0 x^2/4 }.
\end{align*}
Using that $x\mapsto |x|^{1/2}e^{-u_0 x^2/8 }$ is bounded, we find for $|x|>T_0,$ that $I\big(u_0,1\big) \leq C_1/|x|.$

The result for \eqref{eq:non.zero.bias} follows by combining the bounds $(I)-(III).$

{\it Proof of \eqref{eq:non.zero.var}:} Recall that \eqref{eq.var_ub} uses $h(u)= (1-u)^{-3/2}\pi(u/(1-u)).$ With \eqref{eq:defh}, $h(1-u) = u^{-3/2}\pi((1-u)/u) = u^{1/2} g(u).$ Therefore, we find
\begin{align*}
	\Var (\theta | x) \leq 1+ x^2 \frac{\int_0^1  e^{ -\frac{x^2}{2}u} u^{5/2} g(u) du}{\int_0^1  e^{-\frac{x^2}{2}u}u^{1/2}g(u) du }.
\end{align*}
Arguing as for \eqref{eq:non.zero.bias} completes the proof.
\end{proof}

Next, we provide the technical lemmas establishing the rate for the zero coefficients. Recall that $s_n = (p_n/n) \log(n/p_n)$  and define
\begin{align}
	q_n:=\frac{p_n} n \sqrt{\log(n/p_n)}.
	\label{eq.qn_def}
\end{align}
Suppose that Condition \ref{cond2} and Condition \ref{cond3} hold with constants $c$ and $C,$ respectively. With \eqref{eq:defmx},
\begin{align}
m_x 
:=  \frac{\int_0^\infty \tfrac{u} {(1+u)^{3/2}} e^{ \frac{x^2u}{2+2u}} \pi(u) du}{\int_0^\infty \tfrac 1{(1+u)^{1/2}} e^{ \frac{x^2u}{2+2u}} \pi(u) du}
&\leq s_n + \frac{\sqrt{2}}{c} \int_{s_n}^\infty \frac{u e^{ \frac{x^2u}{2+2u}}}{ (1+u)^{3/2}}  \pi(u) du \notag \\
&\leq s_n \big(1+ \frac{\sqrt{2}C}{c} e^{\frac{x^2}{4}}\big)+ \frac{\sqrt{2}}{c} \int_1^\infty \frac{u e^{ \frac{x^2u}{2+2u}}}{ (1+u)^{3/2}}  \pi(u) du \notag \\
&\leq s_n \big(1+ \frac{\sqrt{2}C}{c} e^{\frac{x^2}{4}}\big) +  \frac{\sqrt{8}C}{c} q_n e^{\frac{x^2}{2}},
\label{eq.mx_ineqs}
\end{align}
where for the last inequality, we split the integral $\int_1^\infty = \int_{1}^{\log(n/p_n)}+\int_{\log(n/p_n)}^\infty$ and used Condition \ref{cond3} twice.
These inequality will be very useful for the proofs below. For the variance bound, the last bound is not sharp enough and we need to work with the upper bound induced by the second inequality.\\

\begin{lem}\label{lem:bias.zero}
Work under Condition \ref{cond2} and Condition \ref{cond3}. Then,
\begin{equation*}
\E_0(Xm_X)^2 \lesssim   \frac{p_n}{n} \log(n/p_n).
\end{equation*}
\end{lem}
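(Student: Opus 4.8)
The plan is to compute $\E_0(Xm_X)^2 = \int_{\mathbb R} x^2 m_x^2 \,\phi(x)\,dx$ by splitting the range of integration according to the size of $|x|$ relative to $b_n = \sqrt{\log(n/p_n)}$, and to control $m_x$ on each piece using the bounds in \eqref{eq.mx_ineqs}. The key point is that for small $|x|$, the factor $e^{x^2/4}$ (or $e^{x^2/2}$) in \eqref{eq.mx_ineqs} is harmless, so $m_x \lesssim s_n + q_n$ there; while for large $|x|$, the Gaussian weight $\phi(x) \asymp e^{-x^2/2}$ kills the exponential growth of the $m_x$ bound, so one must keep track of the precise constants in the exponents.

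Concretely, I would first use the trivial bound $m_x \le 1$, which gives $\E_0 (Xm_X)^2 \le \E_0 X^2 = 1$; this is already fine except we need the sharper rate $s_n$, so the trivial bound is only useful on the region where $\phi(x)$ is itself tiny. So: on the region $|x| \le \sqrt{2\log(n/p_n)} = \sqrt{2}\,b_n$, I would plug in $m_x \lesssim s_n(1 + e^{x^2/4}) + q_n e^{x^2/2}$ and integrate $x^2 m_x^2 \phi(x)$. The dominant term to check is $\int_{|x|\le \sqrt 2 b_n} x^2 q_n^2 e^{x^2} \phi(x)\,dx \asymp q_n^2 \int_{|x| \le \sqrt 2 b_n} x^2 e^{x^2/2}\,dx$, which is of order $q_n^2 b_n e^{b_n^2} = q_n^2 b_n (n/p_n)$. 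Since $q_n = (p_n/n) b_n$, this equals $(p_n/n) b_n^3 = (p_n/n)(\log(n/p_n))^{3/2}$ — hmm, that is slightly larger than $s_n = (p_n/n)\log(n/p_n)$, so I would need to cut the region at a slightly smaller threshold, say $|x| \le \sqrt{2\log(n/p_n)} - (\text{something})$, or more simply split at a constant multiple: take the threshold where $q_n^2 e^{x^2} \asymp s_n^2$, i.e. $x^2 \asymp 2\log(s_n/q_n) = 2\log\sqrt{\log(n/p_n)} = \log\log(n/p_n)$, which is much smaller. The cleaner approach is: on $|x|^2 \le \log(n/p_n)$ use the $m_x$ bound and check each of the three resulting terms integrates to $O(s_n)$; on $|x|^2 > \log(n/p_n)$ use $m_x \le 1$ and note $\int_{|x|^2 > \log(n/p_n)} x^2 \phi(x)\,dx \lesssim b_n \Phi^c(b_n) \lesssim b_n \cdot (p_n/n) e^{-\text{const}} \lesssim s_n/b_n = o(s_n)$, using the Gaussian tail bound $\Phi^c(t) \le \phi(t)/t$.

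Carrying this out: with the threshold $|x| \le b_n$ (not $\sqrt 2 b_n$), the term $q_n^2 \int_{|x| \le b_n} x^2 e^{x^2/2}\phi(x)\,dx = q_n^2 \int_{|x|\le b_n} \frac{x^2}{\sqrt{2\pi}}\,dx \asymp q_n^2 b_n^3 = (p_n/n)^2 b_n^5$, which is $o(s_n)$ since $(p_n/n) b_n^4 \to 0$. The term $s_n^2 \int x^2 e^{x^2/4}\phi(x)\,dx \asymp s_n^2 \int_{|x|\le b_n} x^2 e^{-x^2/4}\,dx \lesssim s_n^2 = o(s_n)$. And the term $s_n^2 \int x^2 \phi(x)\,dx \lesssim s_n^2 = o(s_n)$. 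For the complementary region $|x| > b_n$, $\E_0[X^2 \mathbf 1\{|X| > b_n\}] \lesssim b_n \phi(b_n) = b_n (n/p_n)^{-1/2}\cdot(\text{poly})$ — wait, $\phi(b_n) = e^{-\log(n/p_n)/2}/\sqrt{2\pi} = (p_n/n)^{1/2}/\sqrt{2\pi}$, which only gives $\sqrt{p_n/n}$, not $p_n/n$. So I must cut instead at $|x| > \sqrt{2\log(n/p_n)} \cdot \sqrt{(1+\epsilon)}$ or better use the two-sided split at $c\, b_n$ for a large enough constant $c$ so that $\phi(cb_n) = (p_n/n)^{c^2/2}$ beats $s_n$; taking $c = 2$ gives $(p_n/n)^2 \ll s_n$. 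On the middle region $b_n < |x| \le 2b_n$ one still uses the $m_x$ bound, now with $e^{x^2/2} \le e^{2b_n^2} = (n/p_n)^2$, against $\phi(x) \le \phi(b_n) = (p_n/n)^{1/2}$: the worst term is $q_n^2 (n/p_n)^4 (p_n/n)^{1/2} b_n$ which blows up — so this naive bound fails and one must integrate $e^{x^2/2}\phi(x) = $ const over the middle region too, giving $q_n^2 \cdot (2b_n)^3$, fine. The main obstacle is bookkeeping the exponents carefully so that every term is genuinely $O(s_n)$; the slickest route is to observe $x^2 m_x^2 \phi(x) \le x^2 \big(s_n(1+\tfrac{\sqrt2 C}{c}e^{x^2/4}) + \tfrac{\sqrt8 C}{c} q_n e^{x^2/2}\big)^2 \phi(x)$ for all $x$ with $|x|$ not too large and $m_x \le 1$ otherwise, expand the square into three pieces, and integrate each over $\mathbb R$ directly — the Gaussian $e^{-x^2/2}$ exactly cancels $e^{x^2/2}$ leaving a polynomial, contributing $q_n^2 \cdot(\text{poly in }b_n)$, while the $s_n^2 e^{x^2/4}$ piece integrates against $e^{-x^2/2}$ to a convergent integral times $s_n^2$, and $s_n^2$ times a constant; since $s_n^2 = o(s_n)$ and $q_n^2 \mathrm{poly}(b_n) = o(s_n)$ under $p_n = o(n)$, and the tail contribution from $m_x\le1$ is negligible, the lemma follows.
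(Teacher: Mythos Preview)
Your approach is the same decomposition as the paper's—split at a threshold, use \eqref{eq.mx_ineqs} on the inner region and $m_x\le 1$ on the tail—but there is a genuine computational gap that makes your argument fail.

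The error is in the exponent after squaring. From $m_x \lesssim s_n(1+e^{x^2/4}) + q_n e^{x^2/2}$ you get $m_x^2 \lesssim s_n^2 e^{x^2/2} + q_n^2 e^{x^2}$, so $m_x^2\phi(x)\lesssim s_n^2 + q_n^2 e^{x^2/2}$. You handle this correctly in your first attempt (threshold $\sqrt 2\,b_n$, giving $q_n^2 b_n e^{b_n^2}=(p_n/n)b_n^3$, too large). But when you revisit with other thresholds you silently drop a factor $e^{x^2/2}$: the expressions ``$q_n^2\int x^2 e^{x^2/2}\phi(x)\,dx$'' and ``integrate $e^{x^2/2}\phi(x)=\text{const}$'' are wrong—the integrand is $q_n^2 x^2 e^{x^2}\phi(x)= q_n^2 x^2 e^{x^2/2}/\sqrt{2\pi}$, which is \emph{not} a polynomial. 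With threshold $2b_n$, the inner $q_n^2$ contribution is $q_n^2\cdot 2b_n\, e^{2b_n^2}=2b_n^3\to\infty$, not ``$q_n^2\cdot(2b_n)^3$''. Your final claim that the inner part is ``$q_n^2\cdot(\text{poly in }b_n)$'' is therefore false.

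The point you are missing is that the threshold has almost no slack. Writing the inner bound as $q_n^2 T e^{T^2/2}$ and the tail as $T e^{-T^2/2}$, their product is $q_n^2 T^2$, so the best achievable maximum is $q_n T\asymp s_n$, attained precisely when $e^{T^2}=1/q_n^2$, i.e.\ $T=a_n:=\sqrt{2\log(1/q_n)}$. This is the threshold the paper uses; at $a_n$ one gets $q_n^2 a_n e^{a_n^2/2}=q_n a_n\asymp s_n$ on the inside and $a_n e^{-a_n^2/2}=a_n q_n\asymp s_n$ on the tail, and the $s_n^2 a_n^3$ term is $o(s_n)$. Any threshold differing from $a_n$ by more than $O(1/a_n)$ makes one of the two pieces blow up past $s_n$.
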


\begin{proof}
Let $q_n$ be as in \eqref{eq.qn_def} and set $a_n := \sqrt{2\log(1/q_n)}.$ Decompose
\begin{equation*}
\E_0(Xm_X)^2 = \E_0(Xm_X)^2\1\{|X| \leq a_n\} + \E_0(Xm_X)^2\1\{|X| > a_n\} =: I_1+I_2.
\end{equation*}
To bound the term $I_1,$ \eqref{eq.mx_ineqs} and $x^2e^{x^2/2}\leq \tfrac{d}{dx}[xe^{x^2/2}]$ yield $$I_1
 \lesssim s_n^2 \int_{-a_n}^{a_n} x^2 dx+ q_n^2 \int_{-a_n}^{a_n} x^2 e^{x^2/2} dx \lesssim s_n^2a_n^3+ q_n^2 a_ne^{a_n^2/2}.$$  There is a constant only depending on $K$ such that $x^2\log^K(1/x)\leq C_Kx$ for all $x\leq 1.$ Thus, $I_1\lesssim (p_n/n)\log(n/p_n).$

In order to bound $I_2,$ we use $m_x \leq 1$, $\tfrac{d}{dx}[-xe^{-x^2/2}] = -e^{-x^2/2} + x^2e^{-x^2/2}$ and Mills' ratio,
\begin{align*}
I_2 &\leq \E_0X^2\1\{|X| > a_n\} = 2\int_{a_n}^\infty x^2 \phi(x) dx
= 2[-x\phi(x)]_{a_n}^\infty +\int_{a_n}^\infty \phi(x) dx \leq e^{-a_n^2/2}(2a_n + 1).
\end{align*}
Plugging the expression for $a_n$ into the r.h.s. shows that also $I_2\lesssim (p_n/n)\log(n/p_n)$ and this finally gives $\E_0(Xm_X)^2 \lesssim   (p_n/n) \log(n/p_n).$
\end{proof}

\begin{lem}\label{lem:var.zero}
Work under Conditions \ref{cond2} and \ref{cond3}. Then,
\begin{equation*}
\sum_{i: \theta_i = 0}^n \E_0 \Var(\theta_i \mid X_i) \lesssim  p_n  \log(n/p_n).
\end{equation*}
\end{lem}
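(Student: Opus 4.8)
The plan is to bound $\E_0 \Var(\theta_i \mid X_i)$ by splitting on whether $|X_i|$ is small or large, exactly as in the proof of Lemma \ref{lem:bias.zero}, and to exploit the two variance bounds in \eqref{eq.var_ub}. Write $\E_0 \Var(\theta \mid X) = \E_0 \Var(\theta \mid X)\1\{|X| \leq a_n\} + \E_0 \Var(\theta \mid X)\1\{|X| > a_n\}$ with the same cutoff $a_n = \sqrt{2\log(1/q_n)}$ as before, where $q_n = (p_n/n)\sqrt{\log(n/p_n)}$. Then multiplying by $n$ (there are at most $n$ zero coordinates) and showing each piece is $\lesssim (p_n/n)\log(n/p_n)$ gives the claim after summing.

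For the small-$|X|$ regime I would use the bound $\Var(\theta \mid x) \leq m_x + x^2 m_x$ from \eqref{eq.var_ub} together with the first two inequalities in \eqref{eq.mx_ineqs}, namely $m_x \lesssim s_n(1 + e^{x^2/4}) + q_n e^{x^2/2}$. However, the remark right before the lemma warns that the final bound in \eqref{eq.mx_ineqs} ``is not sharp enough'' for the variance, so I would instead keep $m_x \lesssim s_n + \int_{s_n}^\infty u(1+u)^{-3/2} e^{x^2 u/(2+2u)}\pi(u)\,du$ and also bound the extra term $x^2 \int_0^1 (1-z)^2 h(z) e^{x^2 z/2}dz / \int_0^1 h(z)e^{x^2 z/2}dz$ from \eqref{eq:var2}/\eqref{eq.var_ub} directly; the point is that the $(1-z)^2 = (1+u)^{-2}$ weighting in the numerator, rewritten in $u$, yields a factor $u/(1+u)^{2}$ rather than $u/(1+u)^{3/2}$, which is precisely where the three terms of Condition \ref{cond3} — including the delicate $\int_{s_n}^1 u\pi(u)\,du$ piece and the $b_n\int_1^{b_n^2}\pi(u)u^{-1/2}du$ piece — are consumed. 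After integrating the resulting $x^2$-polynomial-times-Gaussian expressions over $|x| \leq a_n$, using $x^2 e^{x^2/2} \leq \tfrac{d}{dx}[xe^{x^2/2}]$ and the elementary inequality $x^2 \log^K(1/x) \leq C_K x$ for $x \leq 1$, each contribution is $\lesssim (p_n/n)\log(n/p_n)$.

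For the large-$|X|$ regime I would use $\Var(\theta \mid x) \leq 1 + x^2 \int_0^1(1-z)^2 h(z)e^{x^2 z/2}dz / \int_0^1 h(z)e^{x^2 z/2}dz$, the first inequality in \eqref{eq.var_ub}. The ``$1$'' contributes $\E_0 \1\{|X|>a_n\} = 2\Phi^c(a_n) \lesssim e^{-a_n^2/2} = q_n$, which is of order $(p_n/n)\sqrt{\log(n/p_n)} \leq (p_n/n)\log(n/p_n)$; the ratio term is handled by the same Condition \ref{cond3} estimates as above, now integrated against $x^2\phi(x)$ over $|x|>a_n$, where the Gaussian decay beats the $e^{x^2 z/2}$ growth for $z$ bounded away from $1$, and for $z$ near $1$ one uses the $(1-z)^2$ factor together with the $u \wedge b_n^3/\sqrt u$ structure of Condition \ref{cond3}. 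Mills' ratio, $\tfrac{d}{dx}[-xe^{-x^2/2}] = -e^{-x^2/2}+x^2 e^{-x^2/2}$, and the explicit value of $a_n$ close the estimate.

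The main obstacle is the small-$|X|$ term: one must avoid the lossy last step of \eqref{eq.mx_ineqs} and instead track the weights $u/(1+u)^{2}$ and $u/(1+u)^{3/2}$ carefully across the regions $u \in [s_n,1]$, $u \in [1, b_n^2]$, and $u \in [b_n^2,\infty)$, matching each region to the corresponding term of Condition \ref{cond3} — this is exactly the reason Condition \ref{cond3} has its somewhat baroque three-term form, and getting the bookkeeping right (especially the borderline $\int_{s_n}^1 u\pi(u)\,du \lesssim s_n$ contribution, which Theorem \ref{thm.necessary} shows cannot be weakened) is the crux. The rest is routine Gaussian integral estimation analogous to Lemma \ref{lem:bias.zero}.
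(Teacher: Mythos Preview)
Your splitting on $|X|$ and your starting point $\Var(\theta\mid x)\leq m_x+x^2m_x$ are right and match the paper. The detour through the $(1-z)^2$ ratio, however, is both arithmetically off and a genuine trap. Rewriting $\int_0^1(1-z)^2h(z)e^{x^2z/2}dz$ in the variable $u=z/(1-z)$ produces an integrand proportional to $(1+u)^{-5/2}\pi(u)e^{x^2u/(2+2u)}$ with \emph{no} factor of $u$, so the claimed weight ``$u/(1+u)^2$'' never appears. Worse, if you actually bound $\Var\leq m_x + x^2\int(1-z)^2h/\!\int h$ (dropping the negative $-(xm_x-x)^2$ in \eqref{eq:var2}) and integrate over $|x|\leq a_n$, the contribution from $u\in[0,s_n]$ is of order $\int_0^{s_n}\pi(u)\,du$, which for the priors at hand is close to $1$, not $\lesssim s_n$. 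The cancellation that kills the small-$u$ range is precisely what is already packaged in the second bound $\Var\leq m_x+x^2m_x$; unpacking it costs you the proof.

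The step you do not name, and which is the crux, is this: the paper keeps $\Var\leq m_x\1\{|x|\leq 1\}+2x^2m_x$, handles the first piece with the last line of \eqref{eq.mx_ineqs}, and for $x^2m_x$ uses the \emph{second} inequality in \eqref{eq.mx_ineqs} (so only $\int_1^\infty$ survives as an integral). Then it applies Fubini and substitutes $y=x/\sqrt{1+u}$, turning $\int_{-a_n}^{a_n}x^2e^{-x^2/(2+2u)}dx$ into $(1+u)^{3/2}\int_{|y|\leq a_n/\sqrt{1+u}}y^2e^{-y^2/2}dy\leq (1+u)^{3/2}\big[(a_n/\sqrt{1+u})^3\wedge\sqrt{2\pi}\big]$. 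After multiplying back by $u(1+u)^{-3/2}\pi(u)$ this is exactly $u\pi(u)\big[(a_n^3/(1+u)^{3/2})\wedge C\big]$, the $u\wedge b_n^3/\sqrt u$ structure of Condition~\ref{cond3}. Your ``track the weights across regions in $u$'' recovers this only if Fubini comes first; if instead you bound the $u$-integral by an expression like $q_n e^{x^2/2}$ and then do the $x$-integral (which is what ``$x^2$-polynomial-times-Gaussian'' and the derivative trick $x^2e^{x^2/2}\leq \tfrac d{dx}[xe^{x^2/2}]$ suggest), you pick up an extra factor $\log(n/p_n)$.

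For $|X|>a_n$ you are working much harder than necessary. The paper simply uses $\Var(\theta\mid x)\leq m_x+x^2m_x\leq 1+x^2$ and then $\int_{a_n}^\infty(1+x^2)\phi(x)\,dx\lesssim a_n\phi(a_n)$ with $a_n=\sqrt{2\log(n/p_n)}$; no Condition~\ref{cond3} is needed on that side at all.
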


\begin{proof}
Let $a_n = \sqrt{2\log(n/p_n)}.$ It is enough to show that $\E_0\Var(\theta \mid X) \lesssim  p_n  \log(n/p_n) /n.$ To prove this, we need to treat the cases that $|X|$ is larger/smaller than $a_n$, separately. To bound the variance, we use \eqref{eq.var_ub}, that is $\Var(\theta \mid X)\leq m_x+x^2m_x\leq 1+x^2.$

{\it Case $|X|>a_n:$} Using the identity $d/dx[x\phi(x)] = \phi(x) - x^2\phi(x),$
\begin{align}
\E_0 \Var(\theta \mid X)\1_{\{|X| > a_n\}} &\leq 2\int_{a_n}^\infty (1+x^2) \phi(x)dx
= 2\Phi^c(a_n)  + 2\int_{a_n}^\infty x^2 \phi(x)dx\notag\\
&= 4\Phi^c(a_n)+ 2[-x\phi(x)]_{a_n}^\infty 
\leq 4\phi(a_n) + 2a_n\phi(a_n). \label{eq:var.zero.bound1}
\end{align}
Using the expression for $a_n$ shows that this can be further bounded by $(p_n/n)\sqrt{\log(n/p_n)}.$

{\it Case $|X|\leq a_n:$}  Notice that the variance bound implies $\Var(\theta \mid X)\leq m_x\1\{|x|\leq 1\}+2x^2m_x.$ Below, we estimate $\E_0 m_X\1\{|X| \leq 1\}$ and $\E_0 X^2 m_X\1\{|X| \leq a_n\}$. For the first term, using  \eqref{eq.mx_ineqs},
\begin{equation}\label{eq:var.zero.first.part}
\E_0 m_X\1\{|X| \leq 1\} \lesssim \int_{-1}^{1}( s_n e^{x^2/4}+q_n e^{x^2/2}) \phi(x) dx \leq 4s_n.
\end{equation} 
For the second term $\E_0 X^2m_X\1\{|X| \leq a_n\}$, we use the second inequality in \eqref{eq.mx_ineqs} and find
\begin{equation*}
\E_0 X^2m_X\1\{|X| \leq a_n\} \lesssim 
s_n  \int_{-a_n}^{a_n}x^2 e^{\frac{x^2}{4}}\phi(x)dx
+
\int_{-a_n}^{a_n} \int_1^\infty\frac{u\pi(u)}{(1+u)^{3/2}} x^2 e^{-\frac{x^2}{2+2u}} du dx.
\end{equation*}
The first integral is bounded by a constant and for the second integral, we use Fubini's theorem, substitute $y=x/\sqrt{1+u},$ and use Condition \ref{cond3}
\begin{align*}
	\int_{-a_n}^{a_n} \int_1^\infty \frac{u\pi(u)}{(1+u)^{3/2}} x^2 e^{-\frac{x^2}{2+2u}} du dx
	&=\int_1^\infty  u \pi(u) \int_{-a_n/\sqrt{1+u}}^{a_n/\sqrt{1+u}} y^2 e^{-\frac{y^2}2} dy du \\
	&\leq  \int_1^\infty  u \pi(u)  \Big[ \Big(\frac{a_n}{\sqrt{1+u}}\Big)^3\wedge \sqrt{2\pi} \Big]  du \\
	&\leq 2^{3/2}C s_n  .
\end{align*}
Together with \eqref{eq:var.zero.first.part} this shows that $\E_0 \Var(\theta \mid X)\1\{|X| \leq a_n\} \lesssim s_n.$ Since in both cases the upper bound is of order $(p_n/n)\log(n/p_n)$ the result follows.
\end{proof}

\bibliographystyle{acm}
\bibliography{sparseequidae}

\end{document}